\documentclass [11pt]{article}
\usepackage{amsmath,amscd,amsthm,amssymb}
\usepackage{graphicx}
\usepackage{ulem}

\headsep=0.5cm \topmargin=-0.2in \textwidth=14.5cm \hoffset=-0.1in
\textheight=23cm
\newtheorem{theorem}{Theorem}[section]
\newtheorem{lemma}[theorem]{Lemma}

\newtheorem{construction}[theorem]{Construction}

\newtheorem{example}{Example}[section]

\newcommand{\G}{\ensuremath{\mathcal{G}}}
\newcommand{\M}{\ensuremath{\mathcal{M}}}
\newcommand{\E}{\ensuremath{\mathcal{E}}}
\newcommand{\D}{\ensuremath{\mathcal{D}}}
\newcommand{\C}{\ensuremath{\mathcal{C}}}
\newcommand{\B}{\ensuremath{\mathcal{B}}}
\newcommand{\A}{\ensuremath{\mathcal{A}}}
\newcommand{\F}{\ensuremath{\mathcal{F}}}
\newcommand{\zed}{\ensuremath{\mathbb{Z}}}
\newcommand{\p}{\ensuremath{{\bf Proof:\ }}}
\newcommand{\Pt}{{\cal P}}
\newcommand{\ucite}[1]{\textsuperscript{\cite{#1}}}
\begin{document}
\title{More results on large sets of Kirkman triple systems\footnote{Supported by NSFC grant 11771119.}}
\author{Yan Liu, Jianguo Lei\thanks{Corresponding author. Email: leijg1964@hebtu.edu.cn}\\\scriptsize{School of Mathematical Sciences, Hebei Normal University, Shijiazhuang 050024, P. R. China}}
\date{}
\maketitle
\def\binom#1#2{{#1\choose#2}}
\def \lb{\lbrack}
\def \rb{\rbrack}
\def \l{\langle}
\def \r{\rangle}
\def \m{\equiv}
\def \lb{\lbrack}
\def\rb{\rbrack}
\def\sub{\subseteq}
\def\inf{\infty} \def\om{\omega}
\def\ba{\bigcap}
\def\bu{\bigcup}
\def\sm{\setminus}
\def\ct{{\cal T}}
\def\ca{{\cal A}}
\def\cb{{\cal B}}
\def \o{\overline}
\def \a{\alpha}
\begin{minipage}[t]{13.5cm}
{\bf Abstract:} The existence of large sets of Kirkman triple systems (LKTSs) is  one of the best-known open problems in combinatorial
design theory. Steiner quadruple systems with resolvable derived designs (RDSQSs)  play an important role in the recursive constructions of LKTSs.
In this paper, we introduce  a special combinatorial structure  RDSQS$^{\ast}(v)$ and use it to present a construction for RDSQS$(4v)$.
 As a consequence, some new infinite families of  LKTSs are given.

\vspace{0.3cm}
{\bf Key words:} Large sets,  Kirkman triple systems,  Steiner quadruple systems,  Resolvable derived designs

\vspace{0.3cm}

\end{minipage}

\baselineskip=17pt
\section{Introduction}

Let $X$ be a $v$-element set and $K$ a set of positive integers. A \textit{$t$-wise  balanced  design} S$(t, K, v)$ is a pair $(X, \mathcal{B})$ where $\mathcal{B}$ is a collection of subsets of $X$ (called {\it blocks}) with the property that the size of every block is in the set $K$ and every $t$-element subset of $X$ is contained in exactly one block. An S$(t, \{k\}, v)$ is denoted by S$(t, k, v)$ and called a \textit{Steiner system}. An S$(2, 3, v)$ is called a \textit{Steiner triple system}, denoted by STS$(v)$. An S$(3, 4, v)$ is called a \textit{Steiner quadruple system} and denoted by SQS$(v)$. It is well known that there is an SQS$(v)$ if and only if $v \equiv 2, 4$ (mod~6) \cite{1960hanani}.

Let $(X, \mathcal{B})$ be an S$(t, k, v)$. If there exists a partition $\Gamma = \{P_{1}, P_{2}, . . . , P_r\}$ of $\mathcal{B}$ such that each  $P_{i}$ forms a \textit{parallel class}, i.e. a partition of $X$, then the S$(t, k, v)$ is called \textit{resolvable} and $\Gamma$ is called a \textit{resolution}. A resolvable STS$(v)$ is called a \textit{Kirkman triple system} and  denoted by KTS$(v)$. It is well known that a KTS$(v)$ exists if and only if $v \equiv 3 $ (mod~6) \cite{1971ray}.

 Two STS$(v)$s on the same set $X$ are said to be \textit{disjoint} if they have no triples in common. A partition of all the triples of a $v$-element set $X$ into $v-2$  disjoint STS$(v)$s, denoted by LSTS$(v)$, is called a \textit{large set} of STS$(v)$s. An LSTS($v$) is known to exist if and only if  $v \equiv 1, 3 $ (mod~6) and $v \neq 7 $ \cite{1983lu,1984lu,1991teirlinck}.

Furthermore, if each STS$(v)$ in an LSTS$(v)$ is a KTS$(v)$, then it is called a \textit{large set of Kirkman triple systems} and denoted by LKTS$(v)$.
 The existence of LKTSs remains an open problem though much work has been done
by many researchers. The existence of an LKTS(15) was shown by Denniston \cite{1974denniston}, and more small orders were given in \cite{1999changge,1979denniston1,2004ge,2017zheng1,2017zheng2,2010zhou}. Denniston \cite{1979denniston2} used transitive KTSs to give a tripling construction for LKTSs, which was generalized to a product construction by Lei \cite{2002lei1}. Zhang and Zhu \cite{2002zhangzhu,2003zhangzhu} improved Denniston's tripling construction and Lei's product construction by removing the condition of transitive KTS. Lei \cite{2002lei2,2004lei} also displayed a recursive construction for LKTSs based on 3-wise balanced designs.  Lately, Xu and Ji \cite{2021xu} used a (2, 1)-RSQS$^{\ast}(4n)$ to construct an LKTS($2^{2n+1} + 1$). More constructions and results on LKTSs were displayed in Chang and Zhou's survey paper \cite{2013chang}.

In the process of studying LKTSs, many combinatorial configurations have been proposed in order to present some recursive constructions for LKTSs. The Steiner quadruple system with resolvable derived designs (RDSQS) is one of these special configurations. The RDSQS was firstly used by Chang and Zhou in \cite{2013chang}, where its main application in constructing LKTSs is restated after Yuan and Kang \cite{2008yuankanganother}.

Let $(X, \mathcal{B})$ be an S$(t, k, v)$. For any $x\in X$, let $\mathcal{B}_{x} = \{ B\setminus \{x\}: x \in B, B \in\mathcal{B}\}$, then $(X\setminus \{x\}, \mathcal{B}_{x})$ is an S$(t-1, k-1, v-1)$, which is said to be the $derived~ design$ at the point $x$. An S$(t, k, v)$ with resolvable derived designs, abbreviated to RDS$(t, k, v)$, refers to an S$(t, k, v)$ whose derived design at every point is resolvable. An  RDSQS$(v)$ refers to an RDS$(3, 4, v)$, i.e., its derived design at every point is a KTS$(v-1)$. Necessarily, the existence of an  RDSQS$(v)$ requires that $v \equiv 4~$(mod~6).

\begin{theorem}{\rm{\cite{2013chang,2008yuankanganother}\label{1lkts}}}
If there exists an RDSQS$(v +1)$, then there exists an $LKTS(3v)$.
\end{theorem}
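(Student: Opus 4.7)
The plan is to construct an LKTS$(3v)$ explicitly on the base set $V = X \times \zed_3$, where the given RDSQS$(v+1)$ is written as $(Y,\mathcal{Q})$ with $Y = X \cup \{\infty\}$ and $|X|=v$. The congruence $v+1 \equiv 4 \pmod 6$ forces $v \equiv 3 \pmod 6$, so $3v \equiv 3 \pmod 6$ and the order $3v$ is admissible for a KTS. For every $y \in Y$ I fix once and for all a resolution $\mathcal{R}_y$ of the derived KTS$(v)$ $\mathcal{D}_y$ on $Y \setminus \{y\}$ into parallel classes.

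The first step is to classify the $\binom{3v}{3}$ triples of $V$ by their projection profile in $\zed_3$: \emph{pure} triples contained in a single layer $X \times \{i\}$, \emph{L-shape} triples with two points in one layer and one point in another, and \emph{transversal} triples meeting each layer exactly once. Pure and L-shape triples are most naturally sourced from the $v+1$ derived KTSs: a block $\{a,b,c\}$ of $\mathcal{D}_y$ lifts to pure triples inside each layer and, by promoting one of its points to another layer, to L-shape triples associated with the parallel class of $\mathcal{R}_y$ that contains $\{a,b,c\}$. Transversal triples are instead read off from the full quadruples of $\mathcal{Q}$, via a prescribed cyclic rule assigning the four entries of each quadruple to elements of $\zed_3$ (with one layer receiving two entries and the other two receiving one each).

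The second step is to assemble these triples into $3v-2$ pairwise disjoint Kirkman systems on $V$. The natural indexing has $3v-2$ slots, obtained from $Y \times \zed_3$ by a small identification governed by the distinguished role of $\infty$. For each index $(y,i)$, the corresponding KTS$(3v)$ comprises (i) pure triples drawn from $\mathcal{D}_y$ transplanted to the appropriate layer, (ii) L-shape triples whose short side is a parallel class of $\mathcal{R}_y$ translated by $i$, and (iii) transversal triples arising from the quadruples of $\mathcal{Q}$ incident to $y$ in a prescribed fashion. The resolution of the resulting KTS$(3v)$ is inherited from $\mathcal{R}_y$: each parallel class of $\mathcal{R}_y$ lifts to a parallel class on $V$ by completing its short side with a matching drawn from the transversal triples.

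The main obstacle will be verifying that every triple of $V$ appears in exactly one of the $3v-2$ systems and in exactly one of its blocks. The global count $\binom{3v}{3} = (3v-2)\cdot v(3v-1)/2$ is forced, but the case analysis by triple profile is delicate and depends in an essential way on the derived designs at \emph{all} points of $Y$ being simultaneously resolvable. A secondary difficulty is confirming the resolvability of each of the $3v-2$ constructed systems, which requires the parallel classes of the various $\mathcal{R}_y$ to dovetail coherently across the three layers of $V$; this is precisely where the ``R'' in RDSQS does the heavy lifting, since a bare SQS$(v+1)$ would not suffice.
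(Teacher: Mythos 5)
The paper does not actually prove this statement: it is quoted from Chang--Zhou and Yuan--Kang, so there is no internal argument to compare against. Judged on its own terms, your proposal is an outline of the known strategy rather than a proof. The two items you defer at the end --- that every triple of $X\times Z_3$ occurs in exactly one block of exactly one of the $3v-2$ systems, and that each system resolves --- are not side ``obstacles'' but the entire content of the theorem; everything you commit to before that is setup, and the parts that would carry the argument are left as a ``prescribed cyclic rule'' and a ``prescribed fashion'' that are never written down, so nothing can be verified.

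There are also concrete defects in the skeleton you do give. The index set $Y\times Z_3$ has $3(v+1)=3v+3$ elements, while an LKTS$(3v)$ has exactly $3v-2$ members; no ``small identification governed by the distinguished role of $\infty$'' bridges a deficit of $5$ (collapsing the three copies of $\infty$ to one still leaves $3v+1$), so the indexing as stated cannot be correct. Your classification of triples by layer profile alone also conflates cases that need different treatment: a ``transversal'' triple meeting each layer once may have repeated $X$-coordinates, e.g.\ $\{(a,0),(a,1),(b,2)\}$ or the vertical triple $\{a\}\times Z_3$, and such triples cannot arise from distributing the four distinct points of a quadruple of $\mathcal{Q}$ among the layers. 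Similarly, transplanting a block of $\mathcal{D}_y$ ``to each layer'' would make each pure triple appear three times among the systems associated with $y$ unless the layer is tied to the index $i$ by a rule you do not specify. Until the assignment rules are fixed and the exact-cover and resolvability checks are actually carried out, this is a plan for a proof, not a proof.
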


In this paper, three infinite families of RDSQSs are given. As a consequence, some new infinite families of LKTSs are given.

The rest of this paper is organized as follows. In Section 2, we use an auxiliary design  named  group divisible  design with resolvable derived designs to construct an RDSQS$(v)$. In Section 3, a special combinatorial structure denoted by RDSQS$^{\ast}(v)$ is introduced. We use it to construct an RDSQS($4v$). In the last section we combine the known constructions to update and expand our results on RDSQSs and LKTSs.

\section{RDSQSs and related designs}

In this section, we mainly establish some results of RDSQS$(v)$  by using the combinatorial
structure group divisible  design with resolvable derived designs (RDGDD). In what follows,
 $Z_v=\{0,1,\ldots,v-1\}$ denotes the integer modulo $v$ residual class additive group.

Let $v$ and $t$ be positive integers and $K$   a set of positive integers. A \textit{group divisible $t$-design} of order $v$ with block sizes from $K$, denoted by GDD($t, K, v$), is a triple $(X, \mathcal{G}, \mathcal{B})$ with the following
properties:

(1) $X$ is a set of $v$ elements (called \textit{points});

(2) $\mathcal{G}=\{G_1, G_2,\dots\}$ is a set of non-empty subsets (called \textit{groups}) of $X$ which partition $X$;

(3) $\mathcal{B}$ is a family of  subsets of $X$  (called \textit{blocks}), each of cardinality from $K$, such that each block intersects any given group
in at most one point, and each $t$-element set of points from $t$ distinct groups is contained in exactly one block.\vspace{0.2cm}

The \textit{type} of a GDD is defined to be the multiset $T = \{|G|: G \in \mathcal{G}\}$ of group sizes. We also use $a^{i}b^{j}c^{k}\cdots$ to denote the type, which means that in the multiset there are $i$ occurrences of $a$, $j$ occurrences of $b$, etc. We often write GDD$(t, k, v)$ instead of GDD$(t, \{k\}, v)$. A  GDD$(t, k, kn)$ of type $n^{k}$ is also called a \textit{transversal $t$-design} and also denoted by TD$(t, k, n)$.

\begin{lemma} {\rm{\cite{1990mill}}}\label{S_gdd}
If there is an S$(t, K, v)$, and for every $k\in K$, there exists a GDD$(t, K^{'}, gk)$ of type $g^{k}$, then there exists a GDD$(t, K^{'}, gv)$ of type $g^{v}$.
\end{lemma}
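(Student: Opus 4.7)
The plan is to use a standard \emph{inflation} (blow-up) construction. Let $(V,\mathcal{A})$ be the given S$(t,K,v)$. For each point $x\in V$, introduce a group $G_x$ of $g$ new copies, and set $X=\bigsqcup_{x\in V}G_x$, $\mathcal{G}=\{G_x:x\in V\}$. This gives $|X|=gv$ and a partition into $v$ groups of size $g$, so the group type of the prospective GDD is already correct; what remains is to specify the blocks and verify the GDD axioms.

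First, for every block $A\in\mathcal{A}$ with $|A|=k\in K$, the set $X_A:=\bigsqcup_{x\in A}G_x$ has $gk$ points partitioned by $\{G_x:x\in A\}$ into $k$ parts of size $g$. By hypothesis a GDD$(t,K',gk)$ of type $g^k$ exists, and after a harmless relabeling I may assume its group partition is exactly $\{G_x:x\in A\}$. I throw the blocks of this sub-GDD into the master family $\mathcal{B}$, and then take $\mathcal{B}$ to be the union over all $A\in\mathcal{A}$.

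Next I would check the GDD$(t,K',gv)$ axioms for $(X,\mathcal{G},\mathcal{B})$. Block sizes lie in $K'$ by construction. Since every block $B\in\mathcal{B}$ is contained in some $X_A$, and the sub-GDD on $X_A$ meets each $G_x$ (with $x\in A$) in at most one point, while the remaining groups $G_y$ with $y\notin A$ are disjoint from $X_A$, each block meets every group in at most one point. The essential step is the $t$-coverage condition: take any $t$-subset $T=\{p_1,\ldots,p_t\}\subseteq X$ with the $p_i$ lying in $t$ distinct groups $G_{x_1},\ldots,G_{x_t}$. The S$(t,K,v)$ property gives a \emph{unique} $A\in\mathcal{A}$ with $\{x_1,\ldots,x_t\}\subseteq A$, so $T\subseteq X_A$; inside the sub-GDD on $X_A$ the $t$-set $T$ meets $t$ distinct groups and hence lies in exactly one block of that sub-GDD, proving existence. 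For uniqueness across different $A$, any block $B\in\mathcal{B}$ containing $T$ lies in some $X_{A'}$ with $\{x_1,\ldots,x_t\}\subseteq A'$, and the uniqueness in $(V,\mathcal{A})$ forces $A'=A$.

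I do not anticipate a genuine obstacle: the construction is purely combinatorial, and the two hypotheses were deliberately shaped so that the global $t$-coverage reduces cleanly to the local $t$-coverage inside each inflated block. The only mild point of care is making the group partition of each sub-GDD coincide with the relevant $\{G_x:x\in A\}$, which is just a choice of labeling and costs nothing.
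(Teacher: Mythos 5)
Your proof is correct and is exactly the standard inflation (``weighting by $g$'') construction that the paper relies on via its citation to Mills; the paper itself gives no proof, and your verification of the three GDD axioms, in particular the reduction of global $t$-coverage to the unique Steiner block followed by the unique block of the corresponding ingredient GDD, is complete. No gaps.
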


Let $(X,\mathcal{G},\mathcal{B})$ be a GDD$(t, k, gn)$ of type $g^{n}$. If $\mathcal{B}$ can be partitioned into some parallel classes, then the GDD$(t, k, gn)$ is called resolvable. For any $x \in G$ and $G \in \mathcal{G}$, let $\mathcal{B}_{x} = \{ B\setminus \{x\}: x \in B, B \in\mathcal{B}\}$, then $(X\setminus G,\mathcal{G} \setminus \{G\},\mathcal{B}_{x})$ is a GDD$(t-1, k-1, g(n-1))$ of type $g^{n-1}$, which is said to be the $derived ~design$ at the point $x$. A GDD$(t, k, v)$ with resolvable derived designs, denoted by RDGDD$(t, k, v)$, refers to a GDD$(t, k, v)$ whose derived design at every point is resolvable.
Similarly,  RDTD$(t, k, n)$ represents a TD$(t, k, n)$ whose derived design at every point is resolvable.

\begin{lemma} \label{gdd3_8}
There exists an $RDGDD(3, 4, 24)$ of type $3^{8}$.
\end{lemma}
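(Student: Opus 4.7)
The plan is to construct the RDGDD by inflating the unique SQS$(8)$ by a factor of $3$. Take $X=Z_3\times Z_8$ with groups $G_i=Z_3\times\{i\}$ for $0\le i\le 7$, fix the SQS$(8)$ (equivalently AG$(3,2)$) on $Z_8$, and on each of its $14$ blocks $B$ place the TD$(3,4,3)$ consisting of the $27$ ``sum-zero'' quadruples $\{(x_j,v_j):v_j\in B,\ \sum_j x_j\equiv 0\ (\mathrm{mod}\ 3)\}$. Lemma~\ref{S_gdd} immediately gives a GDD$(3,4,24)$ of type $3^8$ with $378$ blocks; the real content of Lemma~\ref{gdd3_8} is then to verify that every derived design is resolvable.

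Fix a representative point, say $x=(0,0)$. Its derived design is a GDD$(2,3,21)$ of type $3^7$ on $X\setminus G_0$, and it decomposes naturally into $7$ sub-designs, one per SQS$(8)$-block through $0$ (these seven blocks form a Fano plane on $Z_8\setminus\{0\}$). Each sub-design is a GDD$(2,3,9)$ of type $3^3$ consisting of all triples $\{(x_2,v_2),(x_3,v_3),(x_4,v_4)\}$ with $v_2+v_3+v_4=0$ in $Z_2^3$ and $x_2+x_3+x_4\equiv 0\ (\mathrm{mod}\ 3)$. Resolving the whole derived design amounts to assembling $9$ parallel classes on $21$ points, each obtained by selecting one triple from each of the $7$ sub-designs so that the selected triples partition $X\setminus G_0$.

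My first attempt would be the ansatz $P_s=\{\{(x_2,v_2),(x_3,v_3),(x_4,v_4)\}:x_j\equiv s+\phi(v_j)\ (\mathrm{mod}\ 3)\}$ for a suitable offset function $\phi:Z_8\setminus\{0\}\to Z_3$. This ansatz yields valid parallel classes exactly when $\phi$ sums to zero along every Fano line, which is a small system of linear conditions over $Z_3$. Because the automorphism group of SQS$(8)$ acts transitively on points, resolvability at $(0,0)$ forces resolvability at every point, so this single verification suffices.

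The main obstacle is that the rigid ``sum-zero'' TD$(3,4,3)$ may not admit a valid $\phi$ and therefore may need to be twisted, block by block, by coordinate permutations on the four groups. I therefore expect the final proof either to exhibit an explicit short list of base blocks developed under the action of a group such as $Z_3\times\mathrm{AGL}(3,2)$ (with $Z_3$ acting on the first coordinate), or to give a direct (possibly computer-assisted) search for the $14$ twisting permutations and the $9$ parallel classes at a single point. Once resolvability is checked at $(0,0)$, point-transitivity of the underlying SQS$(8)$ closes the argument.
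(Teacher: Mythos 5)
Your overall architecture matches the paper's: both inflate an SQS$(8)$ by a factor of $3$, placing a TD$(3,4,3)$ on each of the $14$ blocks and invoking Lemma~\ref{S_gdd} to get the GDD$(3,4,24)$ of type $3^{8}$ with $378$ blocks. But the part of the lemma that actually carries content --- exhibiting the TDs and the resolutions of the derived designs --- is where your proposal breaks down. First, your ansatz $P_{s}=\{\{(x_{2},v_{2}),(x_{3},v_{3}),(x_{4},v_{4})\}:x_{j}\equiv s+\phi(v_{j})\}$ cannot produce a parallel class for any choice of $\phi$: since $\phi(v)$ depends only on the point $v$ and each $v\in Z_{8}\setminus\{0\}$ lies on three Fano lines, the three selected triples meeting the group $G_{v}$ all contain the \emph{same} point $(s+\phi(v),v)$, which is therefore covered three times while the other two points of $G_{v}$ are missed. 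The condition you state ($\phi$ summing to zero on every line) only guarantees that one triple per line is selected; it does not address the partition requirement, which fails identically. Moreover $s$ ranges over only $3$ values while $9$ parallel classes are needed ($63$ triples, $7$ per class).

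Second, the reduction to a single point via transitivity of $\mathrm{Aut}(\mathrm{SQS}(8))$ is not justified: transitivity of the base design transfers to the inflated design only if the chosen TDs are compatible with a point-transitive automorphism group of the whole GDD, and you yourself concede the TDs may need to be twisted block by block, which destroys exactly that symmetry. The paper's construction confirms this concern: it assigns \emph{different} linear forms to different SQS$(8)$-blocks (some blocks use $x+y+z+u\equiv m$ with $m$ depending on one coordinate, others $x+y-z-u\equiv 0$, etc.), the resulting design has no point-transitive symmetry, and resolvability is verified by an explicit listing of the $9$ parallel classes at each of the $24$ points (Appendix~A). Finally, your proposal never supplies the construction it promises: the ``$14$ twisting permutations'' and the parallel classes are deferred to an unperformed search. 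For an existence lemma established by explicit construction, that deferred data \emph{is} the proof, so as written the argument is a plan rather than a proof.
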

\begin{proof}
We will construct a  GDD$(3, 4, 24)$ of type $3^{8}$ on $X = ( Z_{7}\cup\{ \infty \} ) \times Z_{3}$ with group set $\{\{ x \} \times Z_{3} : x \in Z_{7}\cup \{ \infty \} \}$.

We first construct an SQS$(8)$ on $Z_{7}\cup\{ \infty \}$ with block set $\mathcal{A}$, the base blocks under the group $Z_{7}$ are
$\{\infty,~0,~1,~3\}, \{0,~1,~2,~5\}.$ For every block $A=\{a_{0},a_{1},a_{2},a_{3}\} \in \mathcal{A}$, there exists a TD$(3,4,3)$ on $A\times Z_{3}$ with group set $\{\{x\}\times Z_{3}: x\in A\}$.
By Lemma \ref{S_gdd}, we can get a GDD$(3, 4, 24)$ of type $3^{8}$ with  block set $\mathcal{B}$ as follows:
$$\mathcal{B}=\{\{(a_{0},x),(a_{1},y),(a_{2},z),(a_{3},u)\}: (a_{0},a_{1},a_{2},a_{3})~ {\rm and}~ (x,y,z,u)~ {\rm are~ listed~ below}\}.$$

\begin{center}
\begin{tabular}{|c|c|}
\hline
$(a_{0},a_{1},a_{2},a_{3})$&$(x,y,z,u), x, y, z, u\in Z_{3}$\\ \hline\hline

$(\infty,~i,~1+i,~3+i)$& $x + y + z + u$ $\equiv m$~(mod~3)\\

$i$ = 0,~1&$(x, m) \in$ \{(0, 0), (1, 2), (2, 1)\}\\ \hline

$(\infty, ~i, ~1+i, ~3+i)$& \\

$i$ = 2,~3,~4,~5,~6& \raisebox{1.5ex}[0pt]{$x + y + z + u \equiv$ 0~ (mod~3)}\\\hline

$(i, ~1+i, ~2+i, ~5+i)$& \\

$i$ = 0, 2, 6& \raisebox{1.5ex}[0pt]{$x + y - z - u \equiv$ 0~ (mod~3)}\\\hline

$(i, ~1+i, ~2+i, ~5+i)$& \\

$i$ = 1,~3&  \raisebox{1.5ex}[0pt]{$x + y + z + u \equiv$ 0~ (mod~3)}\\\hline

& $x + y - z + u \equiv m$~(mod~3)\\

\raisebox{1.5ex}[0pt]{(4,~5,~6,~2)}&$(x, m) \in$ \{(0, 0), (1, 2), (2, 1)\}\\\hline

(5,~6,~0,~3)&$ x + y - z + u \equiv$ 0~ (mod~3)\\\hline
\end{tabular}
\end{center}
\vspace{0.2cm}

We can check that the derived design of the GDD at every point is resolvable.  The resolutions of derived designs at each point of $X$ are listed in Appendix A. Thus, $(X, \mathcal{B})$ forms an RDGDD$(3, 4, 24)$ of type $3^{8}$.
\end{proof}

\begin{lemma}\rm{}\label{gdd3_14}
There exists an $RDGDD(3, 4, 42)$ of type $3^{14}$.
\end{lemma}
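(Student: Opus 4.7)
The plan is to mirror the construction used in Lemma \ref{gdd3_8}, simply scaling the underlying Steiner quadruple system from order $8$ to order $14$. Concretely, I would take the point set $X=(Z_{13}\cup\{\infty\})\times Z_3$ with the natural partition into $14$ groups $\{x\}\times Z_3$ for $x\in Z_{13}\cup\{\infty\}$, and then realise the blocks of a GDD$(3,4,42)$ of type $3^{14}$ by ``inflating'' each block of an SQS$(14)$ on $Z_{13}\cup\{\infty\}$ through a TD$(3,4,3)$. The existence of this GDD is then automatic from Lemma \ref{S_gdd} applied with $t=3$, $K=K'=\{4\}$ and $g=3$, since SQS$(14)$ exists (as $14\equiv 2\pmod 6$) and a TD$(3,4,3)$ exists.

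The first step is to fix an SQS$(14)$ on $Z_{13}\cup\{\infty\}$ admitting the cyclic action of $Z_{13}$ fixing $\infty$; such a system has $91=13\cdot 7$ blocks, hence seven base blocks under $Z_{13}$, and standard direct constructions provide an explicit list. For each base block $A=\{a_0,a_1,a_2,a_3\}$ I would then specify a TD$(3,4,3)$ on $A\times Z_3$ by imposing a modular condition of the form $\varepsilon_0 x+\varepsilon_1 y+\varepsilon_2 z+\varepsilon_3 u\equiv m\pmod 3$ (with each $\varepsilon_i\in\{\pm1\}$ and $m$ either fixed or depending on $x$), exactly as in Lemma \ref{gdd3_8}. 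Different base blocks, and even different translates of the same base block, will in general receive different sign patterns $(\varepsilon_0,\varepsilon_1,\varepsilon_2,\varepsilon_3)$, which is the crucial freedom that makes the subsequent resolvability verification possible.

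Once the GDD is in place, the remaining task is to verify, for each of the $14$ points of $Z_{13}\cup\{\infty\}$ (the three points inside a single group are symmetric under the $Z_3$-translation of the second coordinate), that the induced derived design, which is a GDD$(2,3,39)$ of type $3^{13}$, is resolvable. By the $Z_{13}$-symmetry fixing $\infty$ one really only has to verify resolvability at $\infty$ and at a single representative point of the $Z_{13}$-orbit, but the translation action on $Z_3$ means the sign pattern chosen for each orbit of the SQS$(14)$ has to be tuned globally. The plan is to present the resulting parallel classes in an appendix, exactly in the style of Lemma \ref{gdd3_8}.

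The main obstacle is the finite but delicate search for a combination of sign patterns and constants $m$ on the seven base blocks of the SQS$(14)$ that makes all derived designs simultaneously resolvable. Once such a combination has been located, the verification reduces to the routine check that each derived design decomposes into $13$ parallel classes of triples; the core difficulty is to find the assignment, not to certify it.
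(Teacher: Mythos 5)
Your overall strategy --- inflate an SQS$(14)$ by TD$(3,4,3)$s defined through linear congruences over $Z_3$ and then exhibit a resolution of the derived design at every point --- is exactly the method of the paper, which works on $Z_{14}\times Z_3$ with an SQS$(14)$ generated from $13$ base blocks under $x\mapsto x+2\pmod{14}$ and assigns to each block one of several conditions of the form $x+y+z\pm u\equiv m\pmod 3$. So the route is not genuinely different; you only change the model of the SQS$(14)$ you start from (a $1$-rotational system on $Z_{13}\cup\{\infty\}$ instead of the $+2$-invariant system on $Z_{14}$).

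The genuine gap is that your proposal stops exactly where the proof begins. The lemma is an existence statement whose entire content is the explicit certificate: a concrete choice of sign patterns and constants $m$ for which all $42$ derived GDD$(2,3,39)$s of type $3^{13}$ are simultaneously resolvable, together with the thirteen parallel classes of each; you defer this to ``a finite but delicate search'' and nothing in your argument shows that such an assignment exists for your chosen SQS$(14)$. Two of your intended shortcuts also do not hold. First, the three points of a group are not interchangeable: translating the second coordinate by $1$ changes $x+y+z+u$ by $4\equiv 1\pmod 3$, so it does not preserve the block set, and the paper accordingly lists separate resolutions at all three points of each group. Second, reducing the check to one representative of the $Z_{13}$-orbit requires the congruence conditions to be assigned equivariantly, i.e.\ constant on orbits, which directly conflicts with the ``crucial freedom'' of varying the condition over translates of a base block that you yourself identify as necessary (and that the paper in fact exploits, giving different conditions to different translates of the same base block). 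Finally, the existence of a $1$-rotational SQS$(14)$ with $Z_{13}$ fixing $\infty$ is an additional input you would need to justify --- it is true (e.g.\ the $\mathrm{PSL}(2,13)$-invariant system), but it does not follow merely from $14\equiv 2\pmod 6$.
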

\begin{proof}
We will construct a  GDD$(3, 4, 42)$ of type $3^{14}$ on $X = Z_{14} \times Z_{3}$ with group set $\{\{ x \} \times Z_{3} : x \in  Z_{14} \}$.

First, we construct an SQS(14) on $Z_{14}$  with  block set  $\mathcal{A}$. Below are the base blocks.
$$\begin{array}{lllll}
\{0,1,2,3\},&\{0,3,5,9\},&\{0,5,6,13\},&\{0,3,4,13\},&\{1,7,11,13\},\\
\{0,2,7,9\},&\{0,1,6,7\},&\{0,3,7,10\},&\{0,3,6,11\},&\{0,2,11,13\},\\
\{0,2,4,8\},&\{0,1,4,5\},& \{1,6,10,12\}.
\end{array}$$
The block set  $\mathcal{A}$ will be generated from the base blocks by (+2~mod 14).
Second, we construct a TD$(3,4,3)$ on $A\times Z_{3}$ for every block $A=\{a_{0},a_{1},a_{2},a_{3}\} \in \mathcal{A}$. By Lemma \ref{S_gdd}, there exists  a GDD$(3, 4, 42)$ of type $3^{14}$ with  block set  $\mathcal{B}$ as follows:
$$\mathcal{B}=\{\{(a_{0},x),(a_{1},y),(a_{2},z),(a_{3},u)\}: (a_{0},a_{1},a_{2},a_{3})~ {\rm and}~ (x,y,z,u)~ {\rm are~ listed~ below}\}.$$
\begin{center}
\begin{tabular}{|c|c|}
\hline
$(a_{0},a_{1},a_{2},a_{3})$&$(x,y,z,u), x, y, z, u\in Z_{3}$\\ \hline\hline

& $x + y + z + u  \equiv m$~(mod~3)\\

\raisebox{1.5ex}[0pt]{(4,~6,~8,~12),~(2,~5,~7,~11),~(8,~11,~12,~7)}&$(x, m) \in$ \{(0, 1), (1, 0), (2, 2)\}\\ \hline

&$ x + y + z + u\equiv m$~(mod~3)\\

\raisebox{1.5ex}[0pt]{(0,~2,~7,~9),~(10,~12,~3,~5)}&$(x, m) \in$ \{(0, 2), (1, 1), (2, 0)\}\\ \hline

~(8,~9,~12,~13),~(4,~6,~11,~13),~(11,~3,~7,~9)& \\

(4,~5,~11,~10),~(6,~8,~5,~3),~(10,~13,~9,~0)& $x + y + z - u \equiv$ 1~ (mod~3)\\

(12,~3,~7,~1),~(9,~4,~6,~0),~(12,~4,~9,~1)&  \\\hline

(13,~4,~10,~8),~(12,~5,~8,~1),~(12,~2,~11,~1)&\\

(12,~9,~11,~0),~(6,~12,~5,~11),~(12,~13,~3,~2)& $x + y + z - u \equiv$ 2~ (mod~3)\\

(4,~7,~9,~13),~(6,~9,~5,~10)&\\\hline

the other blocks in $\mathcal{A}$&$x + y + z + u \equiv$ 0~ (mod~3)\\\hline
\end{tabular}
\end{center}
\vspace{0.2cm}

We can check that the derived design of the GDD at every point is resolvable. The resolutions of derived designs at each point of $X$ are listed in Appendix B.
Thus, $(X, \mathcal{B})$ forms an RDGDD$(3, 4, 42)$ of type $3^{14}$.
\end{proof}

Chang and Zhou  used RDGDDs to construct RDSQSs. Let $s = 1, m = 3$ in [4, Lemma 5.5], we have the following lemma.

\begin{lemma}{\rm{\cite{2013chang}}\label{rdsqs-dg}}
Suppose that there exists an $RDS(3,k + 1,n + 1)$. If there exist an RDSQS$(3k+1)$ and an
RDGDD$(3, 4, 3(k+1))$ of type $3^{k+1}$, then there exists an RDSQS $(3n+1)$.
\end{lemma}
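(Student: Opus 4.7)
The plan is to carry out an inflation construction in the style of Lemma \ref{S_gdd}, tuned to preserve resolvability of the derived designs. Fix a distinguished point $\infty$ of the given RDS$(3,k+1,n+1)$, let $V$ denote the remaining $n$ points, and put $X=\{\infty\}\cup(V\times Z_3)$, so $|X|=3n+1$. I would inflate each block of the RDS: a block $B$ containing $\infty$, written $B=\{\infty\}\cup B'$ with $|B'|=k$, is inflated to the $(3k+1)$-set $\{\infty\}\cup(B'\times Z_3)$ and receives a labelled copy of the given RDSQS$(3k+1)$ with $\infty$ in its distinguished role; a block $B$ with $\infty\notin B$ is inflated to $B\times Z_3$ and receives a copy of the RDGDD$(3,4,3(k+1))$ of type $3^{k+1}$ with groups $\{y\}\times Z_3$. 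In addition, include the $n$ global vertical quadruples $\{\infty,(y,0),(y,1),(y,2)\}$ for $y\in V$. To avoid double counting, arrange each RDSQS copy so that one parallel class of the KTS$(3k)$ derived at $\infty$ coincides with the vertical class $\{\{y\}\times Z_3 : y\in B'\}$, and delete the $k$ corresponding quadruples through $\infty$ from that RDSQS copy.

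Next I would verify the SQS property by a case analysis on a $3$-subset $T\subset X$, organised by how many points of $T$ equal $\infty$ and how many distinct $V$-coordinates appear. Triples confined to a single column $\{y\}\times Z_3$, or consisting of $\infty$ together with two points of a common column, are covered exactly once by the global vertical through $y$; triples combining $\infty$ with two distinct columns, or three pairwise distinct columns, are covered by the unique RDS-block through the corresponding $3$-subset of $W$, via its inflated RDSQS or RDGDD as appropriate; and triples of the form $\{(y_1,a),(y_1,b),(y_2,c)\}$ with $y_1\ne y_2$ and $\infty\notin T$ are covered exclusively by the RDSQS attached to the unique RDS-block through $\{\infty,y_1,y_2\}$, because GDD blocks meet each group in at most one point and the RDGDD copies never touch $\infty$. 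In every subcase the covering block is not a deleted local vertical.

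The third step is to show that the derived design is resolvable at every point of $X$. At $\infty$ the vertical triples $V_y:=\{(y,0),(y,1),(y,2)\}$ form one parallel class; the remaining parallel classes are assembled by taking a parallel class $\{B'_1,\dots,B'_{n/k}\}$ of the resolvable $S(2,k,n)$ derived at $\infty$ of the RDS, picking for each $B'_i$ one of the $3(k-1)/2$ non-vertical parallel classes of the KTS$(3k)$ on $B'_i\times Z_3$, and taking the union; counting gives $1+3(n-1)/2=(3n-1)/2$ parallel classes, exactly as required. At a finite point $(y_0,c)$, the derived design is assembled from (i) one triple supplied by the global vertical through $y_0$, (ii) the KTS$(3k)$ derived at $(y_0,c)$ in each inflated RDSQS attached to an RDS-block through $\{\infty,y_0\}$, with one local vertical triple removed, and (iii) the GDD-derived designs at $(y_0,c)$ in each inflated RDGDD attached to an RDS-block through $y_0$ but not $\infty$. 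Resolvability would then be deduced by using the resolvable $S(2,k,n)$ derived at $y_0$ of the RDS as a routing device together with the resolvability hypotheses on RDSQS$(3k+1)$ and RDGDD$(3,4,3(k+1))$.

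The main obstacle is this last piece of alignment. At a finite point the derived design mixes quadruples inflated by two different mechanisms (RDSQS versus RDGDD), and the parallel classes supplied by each must be knit together through the resolution of the derived $S(2,k,n)$ at $y_0$: blocks of that $S(2,k,n)$ through $\infty$ should call on an RDSQS-derived parallel class and blocks omitting $\infty$ should call on an RDGDD-derived parallel class, while the removed local vertical triple is compensated by the triple $\{\infty,(y_0,c'),(y_0,c'')\}$ extracted from the global vertical through $y_0$. Once this bookkeeping is in place, routine counting confirms $(3n-1)/2$ parallel classes and completes the verification.
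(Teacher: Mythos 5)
The paper itself offers no proof of this lemma: it is imported verbatim from Chang and Zhou as the case $s=1$, $m=3$ of Lemma 5.5 of \cite{2013chang}, so there is no in-paper argument to measure you against. Your construction is the right one — inflate the RDS$(3,k+1,n+1)$ by $Z_3$ while fixing $\infty$, place a relabelled copy of the RDSQS$(3k+1)$ on each inflated $\infty$-block (relabelled so that its derived KTS at $\infty$ has the vertical triples as one class of a resolution, with the $k$ local vertical quadruples then deleted), place the RDGDD on each inflated non-$\infty$-block, and adjoin the $n$ global verticals. Your case analysis for the SQS property and your resolution of the derived design at $\infty$ are correct as written.

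The gap sits exactly where you say "the main obstacle" lies, and it is not closed by "routine counting". At a finite point $(y_0,c)$ the derived KTS$(3k)$ supplied by an RDSQS copy on an $\infty$-block through $y_0$ has $(3k-1)/2$ parallel classes, while the derived GDD$(2,3,3k)$ of type $3^{k}$ supplied by an RDGDD copy has only $3(k-1)/2=(3k-1)/2-1$. Knitting one class of each kind through each parallel class of the derived resolvable $S(2,k,n)$ at $y_0$ therefore produces only $\frac{n-1}{k-1}\cdot\frac{3(k-1)}{2}=\frac{3(n-1)}{2}$ parallel classes and leaves one whole class unused in each of the $\frac{n-1}{k-1}$ KTS copies; and the triple $\{\infty,(y_0,c'),(y_0,c'')\}$ is present only once in the derived design (from the global vertical) yet is missing from every one of those copies, so it cannot "compensate" each of them separately as your last sentence suggests. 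The missing step is this: in each such KTS copy take the resolution whose surplus class is the one containing $\{\infty,(y_0,c'),(y_0,c'')\}$, delete that triple from each surplus class, and observe that the blocks of the RDS through $\{\infty,y_0\}$, with $\infty$ and $y_0$ removed, partition $V\setminus\{y_0\}$ (they form an $S(1,k-1,n-1)$); hence the truncated surplus classes, together with the single global triple $\{\infty,(y_0,c'),(y_0,c'')\}$, unite into one further parallel class of $X\setminus\{(y_0,c)\}$, giving the required total $\frac{3(n-1)}{2}+1=\frac{3n-1}{2}$. With that argument inserted, your plan yields a complete and correct proof.
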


\begin{lemma}\label{RDSQS(22)}
There exists an RDSQS$(22)$.
\end{lemma}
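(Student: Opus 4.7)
My plan is as follows. I first note that Lemma~\ref{rdsqs-dg} does not yield an RDSQS$(22)$ directly: setting $3n+1=22$ forces $n+1=8$, and then $k+1=4$ would demand an RDSQS$(8)$, which does not exist (the derived design of any SQS$(8)$ is the Fano plane, a non-resolvable STS$(7)$); $k+1=8$ is circular; and the intermediate values $k+1\in\{5,6,7\}$ fail the divisibility requirement for an RDS$(3,k+1,8)$ to exist at all. Hence the RDSQS$(22)$ must be produced by a direct construction, in the same spirit as the RDGDDs of Lemmas~\ref{gdd3_8} and~\ref{gdd3_14}.

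I would place the $22$ points on $X=\zed_7\times \zed_3\cup\{\infty\}$ and let $\zed_7$ act on the first coordinate while fixing $\infty$. Under this action every orbit on quadruples has size $7$, so the $385$ blocks of the target SQS$(22)$ split into $55$ base blocks, and specifying these base blocks is the heart of the construction. As a seed I would take the block list of the RDGDD$(3,4,24)$ of type $3^8$ built in Lemma~\ref{gdd3_8}, collapse its ``$\infty$-column'' $\{\infty\}\times\zed_3$ to the single point $\infty$, and then add further base quadruples to cover exactly the families of $3$-subsets that this collapse leaves uncovered: triples lying inside one column $\{x\}\times\zed_3$, triples of the form $\{\infty,(x,i),(x,j)\}$, and triples of the form $\{\infty,(x,i),(y,j)\}$ with $x\ne y$. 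An orbit count then verifies the SQS$(22)$ property, and by the $\zed_7$-symmetry only two derived designs, at $\infty$ and at the representative $(0,0)$, need to be resolved — the resolutions to be exhibited in an appendix in the tabular style of Appendices A and B.

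The main obstacle I anticipate is the joint requirement in the last step: the added base quadruples must be chosen so that the derived designs at \emph{both} $\infty$ and $(0,0)$ can be partitioned into $7$ parallel classes of $7$ triples, \emph{simultaneously}. This couples two otherwise independent KTS$(21)$ structures, and picking a compatible choice by hand is delicate. In practice I would expect to run a short computer search over candidate base quadruples that covers the missing $3$-subsets and respects both resolvability conditions, after which the two resolutions can be tabulated explicitly and checked by inspection.
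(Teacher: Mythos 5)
There is a genuine gap, and it lies in the ``collapse'' step that your whole plan rests on. In the RDGDD$(3,4,24)$ of type $3^{8}$ of Lemma~\ref{gdd3_8}, every $3$-subset drawn from three distinct columns lies in exactly one block; in particular, for fixed $(a,x),(b,y)$ with $a\neq b$ the three triples $\{(\infty,k),(a,x),(b,y)\}$, $k\in \zed_3$, lie in three \emph{distinct} blocks. After you identify $\{\infty\}\times \zed_3$ with a single point $\infty$, the triple $\{\infty,(a,x),(b,y)\}$ is therefore covered \emph{three times}, not zero times; it is over-covered, and no amount of adding further quadruples can repair that. (Symmetrically, you have omitted the family of triples $\{(x,i),(x,j),(y,k)\}$ with $x\neq y$ in $\zed_7$, which genuinely is uncovered and must be handled.) A correct construction along these lines would have to delete or rework all blocks through the $\infty$-column, not merely collapse them. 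Two further problems: with only $\zed_7$ acting, the point orbits are $\{\infty\}$ and the three rows $\zed_7\times\{i\}$, so you must exhibit resolutions of \emph{four} derived designs ($\infty$, $(0,0)$, $(0,1)$, $(0,2)$), not two; and since the lemma is a pure existence statement whose content is the explicit object, deferring the choice of base blocks and both resolvability checks to an unexecuted computer search leaves the proof incomplete.

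For comparison, the paper proceeds by a direct construction on $Z_{21}\cup\{\infty\}$ with the full cyclic group $Z_{21}$ acting (fixing $\infty$): it lists $19$ base blocks (four through $\infty$, one of which, $\{0,7,14,\infty\}$, has a short orbit of length $7$), which generate exactly the $385$ blocks of an SQS$(22)$, and then exhibits explicit resolutions of the derived designs at the two orbit representatives $\infty$ and $0$; all other derived designs are translates. Using $Z_{21}$ rather than $\zed_7$ is what reduces the verification to two points, which is the economy you were hoping for but do not obtain with your smaller group. Your opening observation that Lemma~\ref{rdsqs-dg} cannot produce an RDSQS$(22)$ (because an RDSQS$(8)$ would require a resolvable STS$(7)$, which does not exist) is correct but does not substitute for an actual construction.
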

\begin{proof}
We construct an SQS$(22)$ on $Z_{21}\cup \{\infty\}$. Here, we list the base blocks under the group $Z_{21}$.
$${\small \begin{array}{lllll}
\{0, 1, 5, \infty\},&\{0,1,2,4\}, &\{0, 2, 5, 13\},&\{0, 1, 9, 18\},&\{0, 1, 11, 16\},\\
\{0, 3, 9, \infty\},&\{0,2,7,9\},&\{0, 3, 7, 16\},&\{0, 1, 6, 10\},&\{0, 2, 15, 18\},\\
\{0, 2, 10, \infty\},&\{0, 1, 7, 12\}, &\{0, 1, 13, 17\},&\{0, 2, 6, 12\},&\{0, 3, 10, 14\},\\
\{0, 7, 14, \infty\},&\{0, 1, 8, 14\},&\{0, 1, 15, 19\},&\{0, 2, 8, 11\}.
\end{array}}$$

Next, we  give the resolution of the derived design at each point of $Z_{21}\cup \{\infty\}$.  We only list the block sets of the derived designs at point $\infty$ and $0$, and the  blocks in each row form a corresponding parallel class. The  block sets of the derived designs at other points can be obtained under the action of the  group  $Z_{21}$.

Point $\infty$:
\setlength{\arraycolsep}{1.2pt}
$${\small \begin{array}{lllllll}
\{0, 1, 5\},&\{3, 4, 8\},&\{6, 7, 11\},&\{9, 10, 14\},&\{12, 13, 17\},&\{15, 16, 20\},&\{2, 18, 19\};\\
\{1, 2, 6\},&\{4, 5, 9\},&\{7, 8, 12\},&\{0, 16, 17\},&\{10, 11, 15\},&\{13, 14, 18\},&\{3, 19, 20\};\\
\{6, 8, 16\},&\{4, 7, 13\},&\{0, 2, 10\},&\{1, 12, 14\},&\{5, 17, 20\},&\{9, 11, 19\},&\{3, 15, 18\};\\

\{0, 7, 14\},&\{2, 9, 16\},&\{1, 8, 15\},&\{3, 10, 17\},&\{6, 13, 20\},&\{5, 12, 19\},&\{4, 11, 18\};\\
\{3, 6, 12\},&\{5, 7, 15\},&\{1, 9, 20\},&\{4, 16, 19\},&\{2, 14, 17\},&\{8, 10, 18\},&\{0, 11, 13\};\\
\{8, 9, 13\},&\{0, 6, 18\},&\{1, 7, 19\},&\{2, 5, 11\},&\{4, 15, 17\},&\{3, 14, 16\},&\{10, 12, 20\};\\
\{7, 9, 17\},&\{2, 8, 20\},&\{1, 3, 11\},&\{4, 6, 14\},&\{0, 12, 15\},&\{5, 16, 18\},&\{10, 13, 19\};\\
\{0, 8, 19\},&\{6, 9, 15\},&\{3, 5, 13\},&\{2, 4, 12\},&\{1, 17, 18\},&\{7, 10, 16\},&\{11, 14, 20\};\\
\{2, 3, 7\},&\{5, 6, 10\},&\{0, 4, 20\},&\{9, 12, 18\},&\{1, 13, 16\},&\{8, 11, 17\},&\{14, 15, 19\};\\
\{0, 3, 9\},&\{1, 4, 10\},&\{5, 8, 14\},&\{7, 18, 20\},&\{2, 13, 15\},&\{6, 17, 19\},&\{11, 12, 16\}.

\end{array}}$$

Point 0:
$${\small \begin{array}{lllllll}
\{1, 5, \infty\},&\{2, 7, 9\},&\{4, 15, 16\},&\{6, 13, 14\},&\{8, 17, 20\},&\{3, 11, 19\},&\{10, 12, 18\};\\
\{3, 9, \infty\},&\{2, 6, 12\},&\{5, 7, 19\},&\{1, 13, 17\},&\{4, 11, 14\},&\{8, 16, 18\},&\{10, 15, 20\};\\
\{6, 18, \infty\},&\{1, 2, 4\},&\{7, 8, 15\},&\{5, 9, 20\},&\{3, 10, 14\},&\{11, 12, 17\},&\{13, 16, 19\};\\
\{8, 19, \infty\},&\{1, 9, 18\},&\{3, 4, 12\},&\{7, 13, 20\},&\{2, 14, 16\},&\{5, 10, 11\},&\{6, 15, 17\};\\
\{7, 14, \infty\},&\{2, 3, 17\},&\{5, 8, 12\},&\{1, 15, 19\},&\{4, 13, 18\},&\{9, 10, 16\},&\{6, 11, 20\};\\
\{4, 20, \infty\},&\{1, 6, 10\},&\{2, 8, 11\},&\{3, 7, 16\},&\{9, 12, 13\},&\{5, 14, 15\},&\{17, 18, 19\};\\
\{2, 10, \infty\},&\{4, 5, 17\},&\{1, 8, 14\},&\{6, 9, 19\},&\{7, 11, 18\},&\{3, 13, 15\},&\{12, 16, 20\};\\
\{12, 15, \infty\},&\{4, 6, 7\},&\{3, 5, 18\},&\{1, 11, 16\},&\{8, 10, 13\},&\{2, 19, 20\},&\{9, 14, 17\};\\
\{11, 13, \infty\},&\{4, 8, 9\},&\{5, 6, 16\},&\{1, 3, 20\},&\{2, 15, 18\},&\{7, 10, 17\},&\{12, 14, 19\};\\
\{16, 17, \infty\},&\{3, 6, 8\},&\{1, 7, 12\},&\{2, 5, 13\},&\{4, 10, 19\},&\{9, 11, 15\},&\{14, 18, 20\}.
\end{array}}$$
\end{proof}

\begin{lemma}{\rm{\cite{2013chang}}\label{rds(qn+1)}}
There exists an $RDS(3, q+1, q^{n} +1)$ for any prime power $q$ and any positive integer $n$.
\end{lemma}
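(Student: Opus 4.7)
The plan is to realize the desired design as the classical ``Möbius--Witt'' configuration obtained from the action of $PGL(2,q^n)$ on the projective line $X=P^{1}(F_{q^n})=F_{q^n}\cup\{\infty\}$. I would set $B_0=F_q\cup\{\infty\}\subset X$ and take as the block family the full $PGL(2,q^n)$-orbit of $B_0$. Since the setwise stabilizer of $B_0$ in $PGL(2,q^n)$ is exactly $PGL(2,q)$, the number of blocks is $|PGL(2,q^n)|/|PGL(2,q)|=(q^n+1)q^n(q^n-1)/[(q+1)q(q-1)]$, which matches the block count of an S$(3,q+1,q^n+1)$.

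To prove the S$(3,q+1,q^n+1)$ property, I would invoke the sharp 3-transitivity of $PGL(2,q^n)$ on $P^{1}(F_{q^n})$. Because $\{0,1,\infty\}\subseteq B_0$, any 3-subset $T$ of $X$ is the image of $\{0,1,\infty\}$ under a unique Möbius transformation $g$, so $gB_0$ is a block containing $T$; the uniqueness of $g$, combined with orbit--stabilizer applied to the stabilizer $PGL(2,q)$ of $B_0$, forces $\lambda=1$.

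Since the group acts transitively on points, every derived design is isomorphic, so it is enough to produce a resolution at the single point $\infty$. The key step is to describe the blocks through $\infty$ explicitly. If $g\in PGL(2,q^n)$ satisfies $\infty\in gB_0$, then either $g$ fixes $\infty$ (so $g$ is affine, $g(x)=ax+b$, and $gB_0\setminus\{\infty\}=aF_q+b$), or $g$ has a pole at some $c\in F_q$ (and writing $g(x)=\alpha+\mu/(x-c)$ with $\mu\neq 0$, one computes $gB_0\setminus\{\infty\}=\alpha+\mu F_q$). Either way, the derived block is a coset of a $1$-dimensional $F_q$-subspace of $F_{q^n}$, i.e., an affine line of $AG(n,q)$.

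Counting shows the derived design and $AG(n,q)$ have the same block count $q^{n-1}(q^n-1)/(q-1)$, so the derived design at $\infty$ coincides with $AG(n,q)$, and the classical partition of $AG(n,q)$ into parallel classes of parallel lines provides the required resolution. The step I expect to be the most delicate is the case analysis for blocks through $\infty$, especially the ``pole'' case, where one must fix a convenient normal form for $g$ and verify the identity $\{\mu/(x-c):x\in F_q\setminus\{c\}\}\cup\{0\}=\mu F_q$; once that is done, matching the two block counts closes the argument without further combinatorial input.
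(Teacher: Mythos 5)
Your proposal is correct and is essentially the argument behind the paper's citation: the paper states this lemma without proof, quoting \cite{2013chang}, where the construction is exactly the spherical (M\"obius--Witt) geometry $S(3,q+1,q^n+1)$ from the $PGL(2,q^n)$-orbit of $P^1(F_q)$, whose derived design at any point is the point--line design of $AG(n,q)$ and hence resolvable by parallelism. The only cosmetic slip is that a $3$-subset is the image of $\{0,1,\infty\}$ under six M\"obius transformations (one per ordering), not a unique one, but the orbit--stabilizer count you give still yields $\lambda=1$.
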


\begin{theorem}\label{rdsqs}
There exists  an RDSQS$(3\times 7^{n}+1)$ and an RDSQS$(3\times 13^{n}+1)$ for   any nonnegative integer $n$.
\end{theorem}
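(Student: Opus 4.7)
The plan is to apply Lemma~\ref{rdsqs-dg} once per family, with the value of $k$ tuned to the RDGDDs built in Lemmas~\ref{gdd3_8} and~\ref{gdd3_14}. The case $n = 0$ in both families asks for an RDSQS$(4)$, which is satisfied by the unique SQS$(4)$: the derived design at each of its four points is a single $3$-block, which is itself a parallel class of the remaining three points.

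For the family $3\times 7^n + 1$ with $n\ge 1$, set $k = 7$ in Lemma~\ref{rdsqs-dg}. Lemma~\ref{rds(qn+1)} with $q = 7$ supplies the required RDS$(3,8,7^n+1)$, Lemma~\ref{RDSQS(22)} supplies the RDSQS$(3k+1) = $ RDSQS$(22)$, and Lemma~\ref{gdd3_8} supplies the RDGDD$(3,4,24)$ of type $3^8$. A single invocation of Lemma~\ref{rdsqs-dg} then outputs an RDSQS$(3\times 7^n+1)$ for each $n\ge 1$; no iteration in $n$ is needed, since Lemma~\ref{rds(qn+1)} already supplies the RDS for every exponent at once.

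For the family $3\times 13^n + 1$ with $n\ge 1$ the strategy is parallel: set $k = 13$ in Lemma~\ref{rdsqs-dg}. Lemma~\ref{rds(qn+1)} (now with $q = 13$) furnishes the RDS$(3,14,13^n+1)$, Lemma~\ref{gdd3_14} furnishes the RDGDD$(3,4,42)$ of type $3^{14}$, and the third hypothesis asks for RDSQS$(3k+1) = $ RDSQS$(40)$. The main obstacle is precisely this missing base case, since the only admissible parameters for a secondary application of Lemma~\ref{rdsqs-dg} (with $k \in \{7,13\}$) lead either to a non-existent S$(3,8,14)$ or to a circular requirement of RDSQS$(40)$ itself. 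I would therefore construct RDSQS$(40)$ directly, modelled on Lemma~\ref{RDSQS(22)}: list base blocks of an SQS$(40)$ generated cyclically on $Z_{39}\cup\{\infty\}$ under the action of $Z_{39}$, and exhibit explicit parallel-class resolutions of the derived designs at the two orbit representatives $\infty$ and $0$ (the resolutions at all other points follow by translation). Producing and verifying these two resolutions is the computational heart of the proof. Once RDSQS$(40)$ is in hand, a single application of Lemma~\ref{rdsqs-dg} closes the $q = 13$ family, exactly as in the $q = 7$ case.
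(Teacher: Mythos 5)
Your treatment of the family $3\times 7^{n}+1$ matches the paper exactly: one application of Lemma~\ref{rdsqs-dg} with $k=7$, feeding in the RDS$(3,8,7^{n}+1)$ from Lemma~\ref{rds(qn+1)}, the RDSQS$(22)$ from Lemma~\ref{RDSQS(22)}, and the RDGDD$(3,4,24)$ of type $3^{8}$ from Lemma~\ref{gdd3_8}. Your observation that no iteration in $n$ is needed, and your disposal of the $n=0$ case via the trivial RDSQS$(4)$, are both correct.

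For the family $3\times 13^{n}+1$, however, there is a genuine gap. You correctly diagnose that the missing ingredient is an RDSQS$(40)$ and that it cannot be extracted from the lemmas available in Sections~2--3, but you then only \emph{announce} a direct construction on $Z_{39}\cup\{\infty\}$ without producing the base blocks or the resolutions of the derived designs. Since you yourself identify this as ``the computational heart of the proof,'' leaving it as a plan means the $q=13$ half of the theorem is not actually established. The paper closes this gap differently and much more cheaply: it simply invokes the known existence of an RDSQS$(40)$ from Yuan and Kang's earlier work \cite{2009yuan}, after which the single application of Lemma~\ref{rdsqs-dg} with $k=13$ proceeds exactly as you describe. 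So either carry out (and verify) the explicit $Z_{39}$-cyclic construction you sketch, or cite an existing RDSQS$(40)$; as written, the second family remains unproved.
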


\begin{proof} Employ Lemma \ref{rds(qn+1)} with $q = 7$; both an RDSQS$(22)$ and an RDGDD$(3, 4, 24)$ of type $3^{8}$ exist from Lemma \ref{RDSQS(22)} and \ref{gdd3_8} respectively,  then an RDSQS$(3\times 7^{n}+1)$ exists by Lemma \ref{rdsqs-dg}. Similarly, employ Lemma \ref{rds(qn+1)} with $q = 13$;  an RDSQS$(40)$ exists from \cite{2009yuan} and an RDGDD$(3, 4, 42)$ of type $3^{14}$ exists from  Lemma \ref{gdd3_14}, then an RDSQS$(3\times 13^{n}+1)$ exists by Lemma \ref{rdsqs-dg}.
\end{proof}

\section{A construction for RDSQSs}

In this section, we first introduce a special combinatorial structure RDSQS$^{\ast}(v)$, which weakens the properties of (2, 1)-RSQS$^{\ast}(v)$ in \cite{2021xu}. We will use it to present a  recursive construction for RDSQSs.

Let  $(X,\mathcal{B})$ be an SQS$(v)$ with $v \equiv 1~({\rm mod}~3)$. $(X,\mathcal{B})$ is denoted by RDSQS$^{\ast}(v)$ if it satisfies the following properties:

$(1)$~for any $x \in X$,  there is a parallel class $P_{x}^{'}$ of $X\setminus \{x\}$ in $\mathcal{B}_{x} = \{B \setminus \{x\} : x \in B, B \in \mathcal{B} \}$, and set  $P_{x}^{'} = \{B_{x,k}: B_{x,k}\in \mathcal{B}_{x}, 1\leq k \leq \frac{v-1}{3}\}$;

$(2)$ the multiset $\mathcal{M}$ containing each triple of $P_{x}^{'}$ three times and  each triple of $\mathcal{B}_{x} \setminus P_{x}^{'}$ twice can be partitioned into $v-1$ parallel classes $P_{x,k}^{l}$, $1\leq k \leq \frac{v-1}{3}$, $1\leq l \leq 3$;

$(3)$~for $1\leq k \leq \frac{v-1}{3}$, three parallel classes~$P_{x,k}^{l}$$(1\leq l \leq 3)$ share a common triple $B_{x,k}$.

\begin{lemma}\label{rdsqs28}
There exists an RDSQS$^{\ast}(28)$.
\end{lemma}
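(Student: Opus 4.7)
The plan is to exhibit a concrete SQS$(28)$ together with the auxiliary parallel class $P_x'$ and the $27$ parallel classes $P_{x,k}^{l}$ at every point, in the same explicit style used for Lemmas \ref{gdd3_8}, \ref{gdd3_14} and \ref{RDSQS(22)}. The natural first move is to choose a point set admitting a large automorphism group, for instance $Z_{27}\cup\{\infty\}$ or $Z_9\times Z_3 \cup \{\infty\}$ with a cyclic or affine action, so that after working out the structure at one base point $x_0$ the remaining $27$ points are handled by translation. I would first list base blocks generating an SQS$(28)$ under the chosen group action, verifying by a routine $3$-subset count that every triple of $X$ lies in exactly one block.

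Once the SQS is fixed, I would pass to the derived design $\mathcal{B}_{x_0}$, which is an STS$(27)$ on $27$ points containing $117$ triples. Then I would choose a parallel class $P_{x_0}' = \{B_{x_0,1},\dots,B_{x_0,9}\} \subseteq \mathcal{B}_{x_0}$ (nine disjoint triples covering $X\setminus\{x_0\}$). With this choice in hand, the main task is to build the multiset $\mathcal{M}$ (the nine blocks of $P_{x_0}'$ with multiplicity $3$, the remaining $108$ blocks with multiplicity $2$; total $243 = 27 \cdot 9$ triple-copies) and partition it into $27$ parallel classes arranged into nine ordered triples $(P_{x_0,k}^{1},P_{x_0,k}^{2},P_{x_0,k}^{3})$, where the $k$-th ordered triple has the prescribed common block $B_{x_0,k}$. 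Equivalently, for each $k$ I need three parallel classes of $X\setminus\{x_0\}$ all through $B_{x_0,k}$, and jointly these $27$ parallel classes should use every other triple of $\mathcal{B}_{x_0}$ exactly twice.

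After this structure is fixed at $x_0$, I would propagate it to every other $x \in X$ by the group action; if the action has more than one orbit on points, I would repeat the construction separately for one representative of each orbit. The write-up should then follow the pattern of the earlier lemmas: print the base blocks of the SQS$(28)$ in the body of the proof and defer the $27$ parallel classes to an appendix, asserting that a direct check verifies the RDSQS$^{\ast}$ axioms.

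The main obstacle is the third step: producing the $27$ parallel classes with the prescribed common-block pattern. This is a tightly constrained resolvability problem, because after fixing a class through $B_{x_0,k}$ the remaining $24$ points must be resolved in a way consistent with the multiplicity-$2$ budget on the non-selected triples and the multiplicity-$3$ budget on the eight other blocks of $P_{x_0}'$. I expect this to require a targeted computer search, guided by choosing $P_{x_0}'$ to be stabilised by a nontrivial subgroup so that many of the required classes arise as translates of a few orbit representatives; this symmetry is likely also what makes the tabulation manageable in an appendix, as in the treatment of Lemmas \ref{gdd3_8} and \ref{gdd3_14}.
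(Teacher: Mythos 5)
Your outline matches the paper's strategy in every structural respect: the paper also proves this lemma by exhibiting an explicit SQS$(28)$ with a point-transitive-enough automorphism group (it takes the known SQS$(28)$ of Ji and Zhu on $X=Z_{7}\times Z_{4}$ with blocks generated modulo $(7,-)$), fixes the derived STS$(27)$ at one representative point per orbit (here the four points $(0,0),(0,1),(0,2),(0,3)$), and tabulates in an appendix the parallel class $P_{x}'$ together with the $27$ parallel classes of the multiset $\mathcal{M}$ grouped into nine triples sharing the prescribed common blocks. Your parameter counts ($117$ triples in the derived design, $9$ blocks in $P_{x}'$, $243=27\cdot 9$ triple-copies in $\mathcal{M}$) are all correct, and your observation that the multi-orbit case requires one tabulation per orbit representative is exactly what the paper does.

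The gap is that the proposal never produces the object. For a lemma of this type the entire mathematical content \emph{is} the explicit design: the base blocks of a specific SQS$(28)$, the choice of $P_{x}'$, and the $27$ parallel classes realizing properties $(1)$--$(3)$ of the RDSQS$^{\ast}$ definition at each orbit representative. You correctly identify the hard step (partitioning $\mathcal{M}$ into parallel classes with the prescribed common-block pattern) and correctly predict that it needs a targeted search, but you leave it as a conditional: \emph{if} such a partition exists for some choice of SQS$(28)$ and $P_{x}'$, the lemma follows. Nothing in the proposal guarantees that the search succeeds --- indeed there is no a priori reason a randomly chosen SQS$(28)$ or a randomly chosen parallel class in its derived design admits such a partition, which is precisely why the paper reaches for a particular known design with special structure and supplies the full tables in Appendix C. As written, the proposal is a correct research plan rather than a proof; to close it you would have to carry out the construction and exhibit the data (or cite a verifiable source for it).
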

\begin{proof}
An SQS$(28)$ was given by Ji and Zhu \cite{2003ji}. The design is constructed on  $X =Z_{7} \times Z_{4}$ and the base blocks modulo $(7, -)$ are listed in Appendix C.

In fact, the SQS(28) is also an RDSQS$^{\ast}(28)$. We need to prove that it satisfies the properties $(1),(2),$ $(3)$. We list the resolutions of the derived designs at the points $(0, 0), (0, 1), (0, 2), (0, 3)$ in Appendix C, and  the resolutions of the derived designs at the other points of $X$ can be obtained modulo $(7, -)$.
\end{proof}

A GDD$(t, k ,v)$ (resp. S$(t, k, v)$) is called $i$-{\it resolvable}, $0 < i < t$, if its block set can be partitioned into some disjoint GDD$(i, k ,v)$s (resp. S$(i, k, v)$s). A 1-resolvable GDD$(t, k ,v)$ (resp. S$(t, k, v)$) is just a resolvable GDD$(t, k ,v)$ (resp. S$(t, k, v)$). It is obvious that a 2-resolvable GDD$(t, k ,v)$ (resp. S$(t, k, v)$) must be a RDGDD$(t, k, v)$ (resp. RDS$(t, k, v)$).

\begin{example}\label{rdsqs16}
There exists a $2$-resolvable SQS$(16)$ (resp. RDSQS$(16))$.
\end{example}
\begin{proof}
This example comes from \cite{1976baker}. Here we exhibit it on the finite field GF$(16)$. Let $\alpha$ be a primitive element of GF$(16)$, where $\alpha$ is the root of the primitive polynomial $f(x) = x^{4} + x + 1$ over GF$(2)$. Let $X =$ GF$(16)$ and $\mathcal{B}$ consists of all quadruples $\{x_{1}, x_{2}, x_{3}, x_{4}\}$ of $X$ with $x_{1} + x_{2} + x_{3} + x_{4} = 0$. It is well-known that $(X, \mathcal{B})$ is a Boolean SQS$(16)$. It is easy to see that each quadruple in $\mathcal{B}$ generates a parallel class under the action of the additive group (GF$(16), +)$.\vspace{0.2cm}

We list the following base blocks, which can generate a $2$-resolvable SQS$(16)$ (also an RDSQS$(16)$), and  the blocks in each row form a resolvable $S(2, 4, 16)$, under the action of the additive group (GF$(16), +)$.
\setlength{\arraycolsep}{1.6pt}
$$\begin{array}{lllllll}
\vspace{0.05cm}
\uwave{\{0,1,\alpha,\alpha^{4}\}},&\underline{\{0,\alpha^{2},\alpha^{3},\alpha^{6}\}},&\underline{\{0,\alpha^{5},\alpha^{7},\alpha^{13}\}},&\underline{\{0,\alpha^{10},\alpha^{11},\alpha^{14}\}},&\underline{\{0,\alpha^{8},\alpha^{9},\alpha^{12}\}};\\\vspace{0.1cm}
\{0,1,\alpha^{2},\alpha^{8}\},&\{0,\alpha,\alpha^{7},\alpha^{14}\},&\{0,\alpha^{4},\alpha^{6},\alpha^{12}\},&\underline{\{0,\alpha^{3},\alpha^{5},\alpha^{11}\}},&\underline{\{0,\alpha^{9},\alpha^{10},\alpha^{13}\}};\\\vspace{0.2cm}
\{0,1,\alpha^{3},\alpha^{14}\},&\{0,\alpha^{2},\alpha^{4},\alpha^{10}\},&\{0,\alpha,\alpha^{12},\alpha^{13}\},&\underline{\{0,\alpha^{5},\alpha^{6},\alpha^{9}\}},&\underline{\{0,\alpha^{7},\alpha^{8},\alpha^{11}\}};\\\vspace{0.2cm}
\{0,1,\alpha^{5},\alpha^{10}\},&\{0,\alpha,\alpha^{6},\alpha^{11}\},&\{0,\alpha^{4},\alpha^{9},\alpha^{14}\},&\underline{\{0,\alpha^{2},\alpha^{7},\alpha^{12}\}},&\underline{\{0,\alpha^{3},\alpha^{8},\alpha^{13}\}};\\\vspace{0.2cm}
\{0,1,\alpha^{6},\alpha^{13}\},&\{0,\alpha^{3},\alpha^{4},\alpha^{7}\},&\{0,\alpha,\alpha^{8},\alpha^{10}\},&\underline{\{0,\alpha^{2},\alpha^{9},\alpha^{11}\}},&\underline{\{0,\alpha^{5},\alpha^{12},\alpha^{14}\}};\\\vspace{0.2cm}
\{0,1,\alpha^{11},\alpha^{12}\},&\{0,\alpha,\alpha^{3},\alpha^{9}\},&\{0,\alpha^{4},\alpha^{5},\alpha^{8}\},&\underline{\{0,\alpha^{6},\alpha^{7},\alpha^{10}\}},&\underline{\{0,\alpha^{2},\alpha^{13},\alpha^{14}\}};\\
\{0,1,\alpha^{7},\alpha^{9}\},&\{0,\alpha,\alpha^{2},\alpha^{5}\},&\{0,\alpha^{4},\alpha^{11},\alpha^{13}\},&\underline{\{0,\alpha^{6},\alpha^{8},\alpha^{14}\}},&\underline{\{0,\alpha^{3},\alpha^{10},\alpha^{12}\}}.
\end{array}$$

In addition, this design also has the following properties.

(1)~Under the action of the additive group (GF$(16), +)$, the base block $\{0,1,\alpha,\alpha^{4}\}$  generates a parallel class $P = \{\{0,1,\alpha,\alpha^{4}\}$,$\{\alpha^{2},\alpha^{8},\alpha^{5},\alpha^{10}\}$,$\{\alpha^{3},\alpha^{14},$ $\alpha^{9},\alpha^{7}\}$,
$\{\alpha^{6},\alpha^{13},$
$\alpha^{11},\alpha^{12}\}\}$. With $P$ as groups, the underlined base blocks can generate a 2-resolvable TD$(3,4,4)$ (also an RDTD$(3,4,4)$). We list its resolution below,   the blocks in each row form a resolvable TD$(2, 4, 4)$ under the action of the additive group (GF$(16), +)$.\vspace{-0.2cm}
$$\begin{array}{lllllll}
\{0,\alpha^{2},\alpha^{3},\alpha^{6}\},&\{0,\alpha^{5},\alpha^{7},\alpha^{13}\},&\{0,\alpha^{10},\alpha^{11},\alpha^{14}\},&\{0,\alpha^{8},\alpha^{9},\alpha^{12}\};\\
\{0,\alpha^{3},\alpha^{5},\alpha^{11}\},&\{0,\alpha^{2},\alpha^{7},\alpha^{12}\},&\{0,\alpha^{9},\alpha^{10},\alpha^{13}\},&\{0,\alpha^{6},\alpha^{8},\alpha^{14}\};\\
\{0,\alpha^{5},\alpha^{6},\alpha^{9}\},&\{0,\alpha^{7},\alpha^{8},\alpha^{11}\},&\{0,\alpha^{2},\alpha^{13},\alpha^{14}\},&\{0,\alpha^{3},\alpha^{10},\alpha^{12}\};\\
\{0,\alpha^{3},\alpha^{8},\alpha^{13}\},&\{0,\alpha^{2},\alpha^{9},\alpha^{11}\},&\{0,\alpha^{5},\alpha^{12},\alpha^{14}\},&\{0,\alpha^{6},\alpha^{7},\alpha^{10}\}.
\end{array}$$

(2)~We will give a one-to-one correspondence between the elements of GF$(16)$ and $Z_{4} \times Z_{4}$ to rename the four groups in the above TD(3, 4, 4) as $\{x\}\times Z_{4}, x \in Z_{4}$, where $(a, i)$ is denoted by $a_{i}$:
\begin{center}
$(0,1,\alpha,\alpha^{4})\triangleq (0_{0},0_{1},0_{2},0_{3})$,~~~~~~~~
$(\alpha^{2},\alpha^{8},\alpha^{5},\alpha^{10})\triangleq (1_{0},1_{1},1_{2},1_{3})$,

$(\alpha^{3},\alpha^{14},\alpha^{9},\alpha^{7})\triangleq (2_{0},2_{1},2_{2},2_{3})$,~~~~
$(\alpha^{6},\alpha^{13},\alpha^{11},\alpha^{12})\triangleq (3_{0},3_{1},3_{2},3_{3})$.
\end{center}
There exists a one-factorization on $Z_{4}$, $\mathcal{F}=\{F_{1}, F_{2}, F_{3}\}$, where
\begin{center}
$F_1 = \{\{0,1\}, \{2,3\}\},$ ~~
$F_2 = \{\{0,2\}, \{1,3\}\},$~~
$F_3 = \{\{0,3\}, \{1,2\}\}.$
\end{center}\vspace{-0.5cm}
Define

\centerline{ $ \mathcal{C} = \{ \{ (x,a),(x,b),(y,c),(y,d) \}: x,y \in Z_{4}, x\neq y, \{a,b\},\{c,d\} \in F_{s}, 1\leq s \leq 3\}.$}

\noindent
From one-to-one correspondence, $ \mathcal{C}$  happens to be the block set formed by the base blocks not underlined under the action of the additive group (GF$(16), +)$.

\end{proof}

\begin{construction}\label{RDSQS4v}
If there is an RDSQS$^{\ast}(v)$, then there is an RDSQS$(4v)$.
\end{construction}
\begin{proof}
Let $(X,\mathcal{B})$ be an RDSQS$^{\ast}(v)$. For any $x \in X$, there is a parallel class $P_{x}^{'}$ of $X\setminus \{x\}$ in $\mathcal{B}_{x} = \{B \setminus \{x\} : x \in B, B \in \mathcal{B} \}$, let   $P_{x}^{'} = \{B_{x,k}: B_{x,k}\in \mathcal{B}_{x}, 1\leq k \leq \frac{v-1}{3}\}$; define the multiset $\mathcal{M}$ containing each triple of $P_{x}^{'}$ three times, while each triple of $\mathcal{B}_{x} \setminus P_{x}^{'}$ twice,  the blocks of $\mathcal{M}$  can be partitioned into $v-1$ parallel classes $P_{x,k}^{l}$, $1\leq k \leq \frac{v-1}{3}$, $1\leq l \leq 3$; and for $1\leq k \leq \frac{v-1}{3}$, three parallel classes~$P_{x,k}^{l}$$(1\leq l \leq 3)$ share a common triple $B_{x,k}$. Let $X^{'} = X \times Z_{4}$.

For each block $B \in \mathcal{B}$, we construct an RDTD$(3,4,4)$ ($B \times Z_{4}, \mathcal{G}_{B} ,\mathcal{A}^{B})$  by the construction in Example \ref{rdsqs16},  where $\mathcal{G}_{B} = \{ \{x\} \times Z_{4}: x \in B\}$. For every point $(x,i)\in B \times Z_{4}$, denote the corresponding  derived design as $((B\setminus \{x\}) \times Z_{4}, \mathcal{A}_{(x,i)}^{B})$, and $\mathcal{A}_{(x,i)}^{B}$  can be partitioned into 4  parallel classes $\mathcal{A}_{(x,i)}^{B}(j),~j \in Z_{4}$ on $(B\setminus \{x\})\times Z_{4}$.

For any subset $Y\subseteq X$, define a block set
$$\mathcal{C}^{Y} = \{ \{  (x,a),(x,b),(y,c),(y,d)\}: \{x,y\} \subseteq Y, x \neq y, \{a,b\},\{c,d\} \in F_{s}, 1\leq s \leq 3\},$$
where $\{F_{1}, F_{2}, F_{3}\}$ is a one-factorization on $Z_{4}$ defined in Example \ref{rdsqs16}.

Define
\begin{center}
$\mathcal{D} = (\bigcup\limits_{B \in \mathcal{B}}\mathcal{A}^{B})~\bigcup~\mathcal{C}^{X}~\bigcup~\{ \{x\} \times Z_{4}: x \in X\}$.
\end{center}

\hspace{-0.6cm}Then it is routine to check that $(X^{'}, \mathcal{D})$ forms an SQS$(4v)$. Next, we prove that $(X^{'}, \mathcal{D})$ is an RDSQS$(4v)$.

For any subset $Y\subseteq X$, every point $(x,i)\in Y \times Z_{4}$, let $((Y \times Z_{4})\setminus \{(x,i)\}, \mathcal{C}_{(x,i)}^{Y})$ be the derived design of $\mathcal{C}^{Y}$ at the point $(x,i)$.

For any  $(x,i)\in X^{'}$, let
\begin{center}
$\mathcal{D}_{(x,i)}$ = $(\bigcup\limits_{x\in B,B \in \mathcal{B}}\mathcal{A}_{(x,i)}^{B})~\bigcup~\mathcal{C}_{(x,i)}^{X}~\bigcup~\{ (\{x\} \times Z_{4}) \setminus \{(x,i)\}\}$
\end{center}
\hspace{3cm} $=\mathop{(\bigcup\limits_{x\in B, B \in \mathcal{B}\atop B \setminus \{x\}\notin P_{x}^{'}} } \mathcal{A}_{(x,i)}^{B})~\bigcup~\mathop{(\bigcup\limits_{x\in B, B \in \mathcal{B}\atop B \setminus \{x\}\in P_{x}^{'}} } \mathcal{A}_{(x,i)}^{B})~\bigcup$

\vspace{0.2cm}

\hspace{2.8cm}$~\mathop{(\bigcup\limits_{x\in B, B \in \mathcal{B}\atop B \setminus \{x\}\in P_{x}^{'}} }\mathcal{C}_{(x,i)}^{B})~\bigcup~\{ (\{x\} \times Z_{4}) \setminus \{(x,i)\}\}$.\vspace{0.2cm}

\hspace{-0.6cm}Then $((X \times Z_{4})\setminus \{(x,i)\}, \mathcal{D}_{(x,i)})$ is the derived design  at point $(x,i)\in X^{'}$. \vspace{0.2cm}

For each block $B_{x,k}\in P_{x}^{'}$, $1\leq k \leq \frac{v-1}{3}$, let
$$\mathcal{E}^{B_{x,k}}=\mathcal{A}_{(x,i)}^{B_{x,k}}~\cup~\mathcal{C}_{(x,i)}^{B_{x,k}}
 \cup~\{(\{x\} \times Z_{4}) \setminus \{(x,i)\}\}.$$
 From the construction of the  RDSQS$(16)$ in Example \ref{rdsqs16},  $\mathcal{E}^{B_{x,k}}$ is the block sets of the derived design at point $(x,i)$, which can be partitioned into 7 parallel  classes $\mathcal{E}_{(x,i)}^{B_{x,k}}(j), $ $j\in Z_7$ on $((B_{x,k}\cup \{x\})\times Z_{4})\backslash \{(x,i)\}$, where $~((\{x\} \times Z_{4}) \setminus \{(x,i)\}) \subseteq \mathcal{E}_{(x,i)}^{B_{x,k}}(6)$.\vspace{0.2cm}

For $1\leq k \leq \frac{v-1}{3}$, $1 \leq l \leq 3$, and $r=0,1$, let
\begin{center}
$\mathcal{D}_{(x,i)}(k,l,r) = $~$\mathop{( \bigcup\limits_{ x \in B,~ B\in \mathcal{B} \atop B \setminus \{x\} \in P_{x,k}^{l} \setminus \{B_{x,k}\}}}$ $ \mathcal{A}_{(x,i)}^{B}(r^{'}) )~\bigcup ~\mathcal{E}_{(x,i)}^{B_{x,k}}(2(l-1)+r)$,
\end{center}

\noindent where the value $r^{'}$ is equal to $r$ or $r + 2$ according to the first or the second occurrence of $B_{x,k}$. Note that the block $B_{x,k}\in \mathcal{B}_{x}\backslash  P^{'}_{x}$ occurs twice in the multiset $\mathcal{M}$ from the definition of RDSQS$^{\ast}(v)$.

Let
$$\mathcal{D}_{(x,i)}(2v-1) = ( \bigcup\limits_{ B_{x,k} \in P^{'}_{x},1 \leq k \leq \frac{v-1}{3}}  (\mathcal{E}_{(x,i)}^{B_{x,k}}(6)\setminus \{(\{x\} \times Z_{4})\setminus \{(x,i)\}\})$$
\hspace{3.9cm} $\bigcup~ \{(\{x\} \times Z_{4})\setminus \{(x,i)\}\}$.

It is not difficult to check that each  $\mathcal{D}_{(x,i)}(k,l,r)$, $1\leq k \leq \frac{v-1}{3}$, $1 \leq l \leq 3$, $r=0,1$, and $\mathcal{D}_{(x,i)}(2v-1)$
is a parallel class on $(X \times Z_{4})\setminus \{(x,i)\}$. So, the derived design   $((X \times Z_{4})\setminus \{(x,i)\}, \mathcal{D}_{(x,i)})$ is  resolvable. By the arbitrariness of the point $(x,i)\in X^{'}$, we know that $(X^{'}, \mathcal{D})$ is an RDSQS$(4v)$.
\end{proof}

\begin{theorem}\label{rdsqs1}
There exists an RDSQS$(7\times 4^{n})$ for  any positive integer $n$.
\end{theorem}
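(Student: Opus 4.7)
The plan is to proceed by induction on $n$, strengthening the assertion to the claim that an RDSQS$^{\ast}(7 \times 4^{n})$ exists for every positive integer $n$; this is enough because an RDSQS$^{\ast}(v)$ is, by property~(1) of its definition, in particular an RDSQS$(v)$.

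The base case $n = 1$ is supplied directly by Lemma \ref{rdsqs28}. For the inductive step I would assume an RDSQS$^{\ast}(7 \times 4^{n})$ in hand, set $v = 7 \times 4^{n}$, and feed it into Construction \ref{RDSQS4v}; this immediately yields an RDSQS$(7 \times 4^{n+1})$. The remaining work, needed so the induction can continue, is to re-read the proof of Construction \ref{RDSQS4v} and verify that the output in fact meets the three axioms of RDSQS$^{\ast}$. For each $(x,i)\in X \times Z_{4}$, I would take the special parallel class $P'_{(x,i)}$ to be the parallel class $\mathcal{D}_{(x,i)}(2v-1)$ already singled out in that proof, whose triples are $(\{x\}\times Z_{4})\setminus\{(x,i)\}$ together with, for each $B_{x,k}\in P'_{x}$, the four triples of $\mathcal{E}_{(x,i)}^{B_{x,k}}(6)$ partitioning $B_{x,k}\times Z_{4}$. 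For each triple $t\in P'_{(x,i)}$, I would then exhibit three parallel classes of the derived design at $(x,i)$ that all contain $t$ (property~(3)) and check that, taken together with the $\mathcal{D}_{(x,i)}(k,l,r)$ already produced, they partition the multiset of triples prescribed in property~(2).

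The principal obstacle is property~(3), the common-triple condition: the construction as presented produces only $2v-1$ parallel classes, whereas an RDSQS$^{\ast}(4v)$ asks that the multiset $\M$ be partitioned into $4v-1$ parallel classes, three of which share each of the $(4v-1)/3$ triples of $P'_{(x,i)}$. Producing the missing $2v$ parallel classes with the correct shared-triple pattern will force one to exploit both the $^{\ast}$-structure of the input RDSQS$^{\ast}(v)$ (which controls how each triple $B_{x,k}\in P'_{x}$ recurs in the derived KTS$(v-1)$ and hence how its ``$\times Z_{4}$''-expansion appears inside $\bigcup_{x\in B,\,B\in\mathcal{B}}\mathcal{A}^{B}_{(x,i)}$) and the $2$-resolvability of the RDSQS$(16)$ of Example~\ref{rdsqs16} (which supplies, inside each $(B\cup\{x\})\times Z_{4}$, several resolutions of the induced KTS$(15)$ that can be made to share prescribed triples). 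This interplay between the two layers of resolvability is the technical core of the argument and is precisely the feature that the definition of RDSQS$^{\ast}$ was engineered to provide.
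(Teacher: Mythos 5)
Your overall route is the same as the paper's: the paper's entire proof of Theorem \ref{rdsqs1} is the single sentence that an RDSQS$^{\ast}(28)$ exists by Lemma \ref{rdsqs28} and that one then ``applies Construction \ref{RDSQS4v} repeatedly.'' Where you differ is that you have made explicit the hypothesis that this iteration silently requires: Construction \ref{RDSQS4v}, as stated and as proved, turns an RDSQS$^{\ast}(v)$ into an RDSQS$(4v)$ only --- its proof manufactures exactly the $2v-1$ parallel classes $\mathcal{D}_{(x,i)}(k,l,r)$ and $\mathcal{D}_{(x,i)}(2v-1)$ that resolve the derived STS$(4v-1)$, and says nothing about the $4v-1$ parallel classes of the multiset $\mathcal{M}$, the distinguished class $P'_{(x,i)}$, or the common-triple condition that an RDSQS$^{\ast}(4v)$ would demand. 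Consequently the construction can be invoked once (giving $n=1$ from the $^{\ast}$-design on $28$ points and $n=2$ from Construction \ref{RDSQS4v}), but the step from $n=2$ to $n=3$ already needs an RDSQS$^{\ast}(112)$, which nothing in the paper supplies. You have therefore located the genuine gap correctly, and your choice of $\mathcal{D}_{(x,i)}(2v-1)$ as the candidate class $P'_{(x,i)}$ is the natural one.

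That said, your proposal does not close the gap either: you describe the verification of properties (2) and (3) for the output design as ``remaining work'' and correctly flag property (3) as the technical core, but you do not carry it out, so what you have written is a proof plan rather than a proof. To make the induction run you would need to upgrade Construction \ref{RDSQS4v} to the statement ``RDSQS$^{\ast}(v)$ implies RDSQS$^{\ast}(4v)$,'' which means producing, for each point $(x,i)$ of $X\times Z_{4}$, a partition of the new multiset into $4v-1$ parallel classes with the prescribed triple multiplicities and shared triples; this requires exactly the interplay you describe between the $^{\ast}$-structure of the input and extra resolutions of the Boolean SQS$(16)$ of Example \ref{rdsqs16}, none of which is on the page. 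In fairness, the paper's own proof is silent at precisely the same point, so your attempt is no less complete than the published argument --- but neither constitutes a full proof of the theorem as stated for all $n\geq 1$ without that additional lemma.
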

\begin{proof} There exists an RDSQS$^{\ast}(28)$ by Lemma \ref{rdsqs28}, then there exists an RDSQS$(7\times 4^{n})$  by applying Construction \ref{RDSQS4v} repeatedly.
\end{proof}

\section{Main results}
In this section, we extend our new results to a wider range by combining the known constructions for RDSQSs and LKTSs.

\begin{lemma}{\rm{\cite{2010yuan}\label{rds(uv+1)}}}
If there exists an RDSQS$(u + 1)$ and an RDSQS$(v + 1)$, then there exists an RDSQS$(uv + 1)$.
\end{lemma}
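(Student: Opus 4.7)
The plan is to apply a singular direct product construction to the given designs using the point $\infty$ as a common one-point sub-design. Write $(X \cup \{\infty\}, \mathcal{B}_1)$ for the given RDSQS$(u+1)$ and $(Y \cup \{\infty\}, \mathcal{B}_2)$ for the given RDSQS$(v+1)$; the target lives on $W = (X \times Y) \cup \{\infty\}$, which has $|W| = uv + 1$ points. The block set $\mathcal{D}$ will be assembled from three families so as to cover every 3-subset of $W$ exactly once. First, for each $x \in X$, place a copy of $\mathcal{B}_2$ on the column $\{\infty\} \cup (\{x\}\times Y)$, identifying $\infty \leftrightarrow \infty$ and $y \leftrightarrow (x,y)$. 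Second, for each block $B = \{a_1, a_2, a_3, a_4\} \in \mathcal{B}_1$ with $\infty \notin B$, place a TD$(3, 4, v)$ on $B \times Y$ with groups $\{a_i\} \times Y$. Third, for each block $\{\infty, a_1, a_2, a_3\} \in \mathcal{B}_1$ through $\infty$, place a GDD$(3, 4, 3v+1)$ of type $1^1 v^3$ on $\{\infty\} \cup (\{a_1, a_2, a_3\} \times Y)$ with groups $\{\infty\}$ and $\{a_i\}\times Y$. A case analysis on 3-subsets of $W$, organized by whether $\infty$ is included and by the column-pattern of the non-$\infty$ coordinates, shows that each 3-subset of $W$ lies in exactly one block of $\mathcal{D}$, so $(W, \mathcal{D})$ is an SQS$(uv+1)$.

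To establish the RDS property, I exhibit a resolution of the derived design $\mathcal{D}_p$ at every point $p \in W$. For $p = \infty$, the triples of $\mathcal{D}_p$ on $X\times Y$ split into KTS$(v)$-triples within each column (resolvable because $\mathcal{B}_2$ is an RDSQS) and cross triples across three columns indexed by the triples of the derived KTS of $\mathcal{B}_1$ at $\infty$ (resolvable because $\mathcal{B}_1$ is an RDSQS). A resolution is obtained by pairing each parallel class of the derived KTS of $\mathcal{B}_1$ at $\infty$ with coherent parallel classes in the column KTSs, so that the union over all columns of the chosen classes is itself a parallel class on $X \times Y$. For $p = (x_0, y_0) \in X \times Y$, an analogous decomposition holds, with contributions from the column at $x_0$ (resolved via the derived KTS of $\mathcal{B}_2$ at $y_0$), from the third-family blocks through $(x_0, y_0)$ (indexed by the derived KTS of $\mathcal{B}_1$ at $x_0$), and from the second-family blocks through $(x_0, y_0)$; the resolution is built by the analogous pairing of these resolutions.

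The main obstacle is choosing the GDDs and TDs in the second and third families so that their own internal structures admit resolutions that align with the column resolutions at every derived-design point. This synchronization is precisely what the RDSQS hypothesis supplies: because both $\mathcal{B}_1$ and $\mathcal{B}_2$ come equipped with explicit resolutions of every derived KTS, one has enough aligned parallel classes to stitch together a global resolution of each derived STS$(uv)$ of the target $\mathcal{D}$, yielding the RDSQS$(uv+1)$.
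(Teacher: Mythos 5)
The paper offers no proof of this lemma: it is imported verbatim from Yuan and Kang \cite{2010yuan}, so there is nothing internal to compare your argument against, and it must be judged on its own. Judged that way, it breaks down at the very first stage: the block set $\mathcal{D}$ you describe is not an SQS$(uv+1)$. Take a triple $T=\{(x,y_1),(x,y_2),(x',y_3)\}$ with $x\neq x'$, i.e.\ two points in one column and one in another. No block of your first family contains $T$, since those blocks lie inside a single column together with $\infty$. No block of your second or third family contains $T$ either, because a block of a TD$(3,4,v)$ or of a GDD of type $1^1v^3$ meets each group in at most one point and hence contains at most one point of the column $\{x\}\times Y$. So all $u(u-1)v\binom{v}{2}$ such triples are covered zero times. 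These ``mixed'' triples are exactly what forces product constructions for quadruple systems to carry an additional family of blocks with two points in each of two columns (compare the set $\mathcal{C}^{X}$ built from the one-factorization $\{F_1,F_2,F_3\}$ in Construction~\ref{RDSQS4v}); and the obvious repair is obstructed by parity here, since $v+1\equiv 4\pmod 6$ forces $v$ odd, so $K_v$ admits no one-factorization and the doubling-style fix does not apply directly.

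There are two further problems. First, the auxiliary design invoked in your third family does not exist: in a GDD$(3,4,3v+1)$ of type $1^1v^3$ every block must be a transversal through $\infty$, so counting blocks via the $v^3$ transversal triples gives $v^3$ blocks while counting via the $3v^2$ pairs through $\infty$ gives $v^2$ blocks, a contradiction for $v>1$. The object that belongs in that slot is a candelabra-type quadruple system with stem $\{\infty\}$ (which, unlike a GDD, also covers the mixed triples above), and its existence and resolvable structure would themselves have to be established. Second, even granting corrected ingredients, the resolvability argument is only a statement of intent: ``pairing \dots with coherent parallel classes'' and ``stitching together'' name the difficulty rather than resolve it. The entire content of such a product construction lies in specifying, for each derived point, which parallel class of each TD/candelabra ingredient is matched with which class of the column systems so that the union is a partition of $(X\times Y)\setminus\{p\}$ (this is precisely the bookkeeping done with the classes $\mathcal{A}^{B}_{(x,i)}(j)$ and $\mathcal{E}^{B_{x,k}}_{(x,i)}(j)$ in the proof of Construction~\ref{RDSQS4v}), and no such mechanism is supplied here.
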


\begin{lemma}{\rm{\cite{2014zhou}\label{zhourdsqs}}}
Let $L = \{4^{m}25^{n} - 1 : m \geq 1, n \geq 0\}~\cup$ $\{4 \times 7^{n} - 1 : n \geq 0\}$ $\cup$ $\{2q^{n} + 1 : n \geq 0, q \equiv 7~({\rm mod}~12)$ and $q$ is a prime power$\}$. Then there exist an RDSQS$(v + 1)$ and an LKTS$(3v)$ where $v$ is the product of some elements in $L$.
\end{lemma}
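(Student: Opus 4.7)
The plan is to recognize this as a packaging result: it asserts RDSQS$(v+1)$ and LKTS$(3v)$ for every $v$ that factors over a specified "base" set $L$, so the argument decomposes into (i) verifying the base cases, (ii) a multiplicative step, and (iii) the standard conversion to LKTSs. I would organize the proof around exactly these three pieces, in that order.

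First, for each $w\in L$ I would exhibit an RDSQS$(w+1)$. For $w=4\times 7^{n}-1$ this is immediate from Theorem \ref{rdsqs1}. For $w=4^{m}25^{n}-1$ I would iterate Construction \ref{RDSQS4v} on an RDSQS$^{\ast}$ base of order $4$ (trivial case) combined with RDSQS$(100)$; that is, one builds up the $4^{m}$ factor by repeated quadrupling and the $25^{n}$ factor by a product of $25$-ary RDSQS ingredients. For $w=2q^{n}+1$ with $q\equiv 7\pmod{12}$ a prime power, I would feed Lemma \ref{rds(qn+1)} (which supplies an RDS$(3,q+1,q^{n}+1)$) into a doubling-type recursion starting from an RDSQS$(2q+2)$ built on $Z_{q}\times Z_{2}$ adjoined with two infinite points; the congruence $q\equiv 7\pmod{12}$ guarantees the parity/mod-$6$ conditions needed for the resulting $2q^{n}+2\equiv 4\pmod{6}$.

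Second, I would run the multiplicative step via Lemma \ref{rds(uv+1)}: given a factorization $v=w_{1}w_{2}\cdots w_{s}$ with each $w_{i}\in L$, the inputs RDSQS$(w_{1}+1)$ and RDSQS$(w_{2}+1)$ combine to yield RDSQS$(w_{1}w_{2}+1)$; pairing the result with RDSQS$(w_{3}+1)$ gives RDSQS$(w_{1}w_{2}w_{3}+1)$, and induction on $s$ delivers RDSQS$(v+1)$. Third, Theorem \ref{1lkts} then immediately converts RDSQS$(v+1)$ into LKTS$(3v)$, finishing the proof.

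The main obstacle is clearly the verification of the three base families in step one; none of them is a single quotation of an earlier lemma in the present paper, and in particular the prime-power family $\{2q^{n}+1\}$ requires knitting together Lemma \ref{rds(qn+1)} with an ad-hoc RDSQS$(2q+2)$ construction, and the $4^{m}25^{n}$ family requires checking that Construction \ref{RDSQS4v} (or an analogous RDSQS$^{\ast}$-based quadrupling) can be iterated cleanly against a $25$-ary ingredient. Once those base cases are safely in hand the rest of the argument is a purely formal induction on the number of factors in $v$, followed by a one-line appeal to Theorem \ref{1lkts}.
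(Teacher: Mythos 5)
This lemma is quoted verbatim from \cite{2014zhou}; the present paper supplies no proof of it, so there is no internal argument to compare yours against. Your overall architecture --- establish each family in $L$ as a base case, induct on the number of factors of $v$ via Lemma \ref{rds(uv+1)}, and finish with Theorem \ref{1lkts} --- is the standard packaging for results of this shape, and those last two steps are sound. But the entire content of the lemma lives in the base cases, and that is exactly where your proposal has genuine gaps.

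Concretely: (i) Theorem \ref{rdsqs1} produces an RDSQS$(7\times 4^{n})$, not an RDSQS$(4\times 7^{n})$; the two orders coincide only at $n=1$, so the family $\{4\times 7^{n}-1\}$ is not ``immediate'' from it --- and in any case that theorem is a new result of the present paper, whereas the lemma is a 2014 result, so leaning on it would invert the logical dependence. (ii) For $4^{m}25^{n}-1$ you invoke an RDSQS$(100)$ and unspecified ``$25$-ary RDSQS ingredients'' that appear nowhere in the paper; moreover Construction \ref{RDSQS4v} consumes an RDSQS$^{\ast}(v)$ but only certifies that its output is an RDSQS$(4v)$, not an RDSQS$^{\ast}(4v)$, so it cannot be iterated as freely as you assume without re-verifying the starred properties. (iii) For $2q^{n}+1$ the ``doubling-type recursion starting from an RDSQS$(2q+2)$ built on $Z_{q}\times Z_{2}$ adjoined with two infinite points'' is asserted rather than constructed, and that construction is precisely the hard content of Zhou and Chang's paper. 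In short, your reduction to base cases is fine as a skeleton, but every base case is left unproven, so the proposal does not constitute a proof; the correct treatment of this statement is simply the citation to \cite{2014zhou}.
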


\begin{lemma}{\rm{\cite{2019chang}\label{changrdsqs}}}
There exists an RDSQS$(2v)$ for $v \in \{23, 29, 41, 47, 53, 59, 71, 83\}$.
\end{lemma}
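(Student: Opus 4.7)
The plan is to handle each of the eight orders $2v \in \{46, 58, 82, 94, 106, 118, 142, 166\}$ by a direct construction with a prescribed automorphism group, followed by an explicit verification that the SQS axiom holds and that the derived design at every point is a KTS$(2v-1)$. In every case $v$ is a prime with $v \equiv 2~({\rm mod}~3)$, so $2v \equiv 4~({\rm mod}~6)$ and an SQS$(2v)$ exists in principle by Hanani's theorem; what has to be built is one SQS with the stronger ``resolvable-derived-design'' property.

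For each $v$ I would take $X = Z_{2v-1} \cup \{\infty\}$ and look for a 1-rotational SQS$(2v)$ under the translation action of $Z_{2v-1}$: that is, a small collection of base quadruples whose $Z_{2v-1}$-orbits cover every triple of $X$ exactly once. By the transitivity of $Z_{2v-1}$ on the finite points, the RDSQS condition reduces to checking resolvability of just two derived designs, one at $\infty$ and one at a representative finite point. Each such derived design is an STS$(2v-1)$, and one needs to exhibit a partition of its triples into $v-1$ parallel classes of $(2v-1)/3$ triples each.

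The main obstacle is coupling the two problems: the SQS axiom constrains the choice of base quadruples, while resolvability of the two derived designs imposes a 1-factorization-style condition on the induced triples, and there is no a priori reason these two conditions can be satisfied simultaneously for every target order. My strategy for managing this would be to demand a larger prescribed group whenever possible --- for instance a multiplier subgroup of $Z_{2v-1}^{\ast}$ that stabilises the block set --- which collapses the base quadruples into short orbits and the base triples of each derived design into only a few classes; the remaining choices can then be finished by a backtracking computer search tailored to each $v$. This is in essence the template followed in Chang \cite{2019chang}, where explicit base blocks together with parallel-class tables for both derived designs are recorded for all eight orders.
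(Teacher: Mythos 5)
You should first note that the paper does not prove this lemma: it is imported verbatim from Chang, Zheng and Zhou \cite{2019chang}, so the only ``proof'' present in the paper is the citation itself. Judged as a self-contained re-derivation, your proposal has a genuine gap: it is a search plan, not a proof. The whole content of the lemma is the existence of eight specific combinatorial objects, and your argument reduces this to the assertion that a backtracking search over base quadruples of a $1$-rotational SQS$(2v)$ on $Z_{2v-1}\cup\{\infty\}$, constrained simultaneously by the SQS axiom and by resolvability of the two orbit-representative derived designs, ``can then be finished.'' Nothing in the proposal shows that this search space is non-empty for any of the eight orders --- even the unconstrained question of which orders admit a $1$-rotational SQS is itself nontrivial --- and no base blocks or parallel-class tables are exhibited. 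The reductions you do carry out are sound (transitivity of $Z_{2v-1}$ reduces the RDSQS check to the derived designs at $\infty$ and at one finite point, and $2v\equiv 4\pmod 6$ forces $2v-1\equiv 3\pmod 6$, so those derived STS$(2v-1)$s are at least admissible as Kirkman triple systems), but they only shrink the problem; they do not solve it. Compare with how this paper handles its own direct constructions (Lemmas \ref{gdd3_8}, \ref{gdd3_14}, \ref{RDSQS(22)}, \ref{rdsqs28}): in each case the base blocks and the full resolution data are written out so that the claim is checkable.

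A secondary issue is your closing sentence, which asserts that this is ``in essence the template followed'' in \cite{2019chang} and that explicit base blocks and parallel-class tables for all eight orders are recorded there. That is an unverified claim about the contents of the reference; its title (``Existence of frame-derived H-designs'') indicates that the mechanism there runs through frame-derived H-designs and associated recursive machinery rather than a uniform $1$-rotational direct search, so you should not lean on this characterization. In a paper that merely quotes the result, the correct move is simply to cite it; if you intend an independent proof, you must actually supply the designs or a verifiable construction of them.
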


\begin{theorem}
Let $L = \{7\times 4^{n} - 1: n\geq 1\}~\cup~\{3\times 7^{n}: n\geq 0\}~\cup~\{3\times 13^{n}: n\geq 0\}~\cup~\{4^{m}25^{n} - 1 : m \geq 1, n \geq 0\} ~\cup~ \{4 \times 7^{n} - 1 : n \geq 0\} ~\cup~ \{2q^{n} + 1 : n \geq 0, q \equiv 7~({\rm mod~12})$ and $q$ is a prime power$\}~\cup~ \{45,57,81,93,105,117,141,165\}$. Then there exists an RDSQS$(v + 1)$ and an LKTS$(3v)$ where $v$ is the product of some elements in $L$.
\end{theorem}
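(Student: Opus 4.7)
The plan is to verify that each element $\ell \in L$ comes equipped with an RDSQS$(\ell+1)$, and then to multiply using Lemma \ref{rds(uv+1)} and pass to LKTSs through Theorem \ref{1lkts}. The theorem is essentially a bookkeeping consolidation of what has been proved in Section 2, Section 3, and the cited literature.

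\textbf{Step 1: Each element of $L$ is an admissible ``base''.} First I would go through the six families comprising $L$ and exhibit the corresponding RDSQS on one more point:
\begin{itemize}
\item For $\ell = 7\cdot 4^n - 1$ with $n\ge 1$, the required RDSQS$(\ell+1) = $ RDSQS$(7\cdot 4^n)$ is given by Theorem \ref{rdsqs1}.
\item For $\ell = 3\cdot 7^n$ and $\ell = 3\cdot 13^n$ with $n\ge 0$, Theorem \ref{rdsqs} supplies RDSQS$(3\cdot 7^n + 1)$ and RDSQS$(3\cdot 13^n + 1)$.
\item For $\ell = 4^m 25^n - 1$, $\ell = 4\cdot 7^n - 1$, and $\ell = 2q^n + 1$ (with $q\equiv 7\pmod{12}$ a prime power), Lemma \ref{zhourdsqs} already gives the RDSQS$(\ell+1)$ directly.
\item For $\ell \in \{45, 57, 81, 93, 105, 117, 141, 165\}$, observe that $\ell + 1 \in \{46, 58, 82, 94, 106, 118, 142, 166\} = \{2v : v \in \{23,29,41,47,53,59,71,83\}\}$, so Lemma \ref{changrdsqs} supplies RDSQS$(\ell+1)$.
\end{itemize}
This exhausts the six parts of the union defining $L$.

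\textbf{Step 2: Multiplying up.} Suppose $v = \ell_1 \ell_2 \cdots \ell_r$ is a product of elements of $L$. I will induct on $r$. The base case $r=1$ is Step 1. For the inductive step, let $v' = \ell_1 \cdots \ell_{r-1}$ so that $v = v' \ell_r$. By induction there is an RDSQS$(v'+1)$, and by Step 1 there is an RDSQS$(\ell_r + 1)$. Lemma \ref{rds(uv+1)}, applied with $u = v'$ and $v = \ell_r$, produces an RDSQS$(v' \ell_r + 1) = $ RDSQS$(v+1)$, as required.

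\textbf{Step 3: Passing to LKTSs.} Finally, feeding the RDSQS$(v+1)$ obtained in Step 2 into Theorem \ref{1lkts} yields the desired LKTS$(3v)$.

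There is no serious obstacle: the only content is the (straightforward) identification of which previously-established result certifies each family in $L$, with the numerical correspondence $\{45,\dots,165\}\leftrightarrow\{23,\dots,83\}$ in the last family being the one spot that requires a moment's arithmetic check. The rest is a formal induction on the number of factors via Lemma \ref{rds(uv+1)}, closed off by Theorem \ref{1lkts}.
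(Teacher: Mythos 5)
Your proposal is correct and follows exactly the paper's (much terser) argument: certify an RDSQS$(\ell+1)$ for each $\ell\in L$ via Theorems \ref{rdsqs} and \ref{rdsqs1} and Lemmas \ref{zhourdsqs} and \ref{changrdsqs}, close under products with Lemma \ref{rds(uv+1)}, and apply Theorem \ref{1lkts}. Your explicit induction on the number of factors and the arithmetic check $\{45,\dots,165\}+1=\{2\cdot 23,\dots,2\cdot 83\}$ merely make explicit what the paper leaves implicit.
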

\begin{proof}
Applying Theorem \ref{rdsqs}, Theorem \ref{rdsqs1},  Lemma \ref{zhourdsqs} and Lemma \ref{changrdsqs} to the known product constructions in Lemma \ref{rds(uv+1)}, we obtain an RDSQS$(v+1)$s. By Theorem \ref{1lkts}, there exists an LKTS$(3v)$.
\end{proof}



\newpage

\appendix
\renewcommand{\appendixname}{Appendix~\Alph{section}}
\section{Appendix: The derived designs in Lemma \ref{gdd3_8}}
For the given GDD$(3, 4, 24)$ of type $3^{8}$  in Lemma \ref{gdd3_8}, we list all the blocks of the derived design GDD$(2, 3, 21)$ of type $3^{7}$ at each point $x \in ( \{ \infty \} \cup Z_{7} ) \times Z_{3}$. The seven blocks in each row form a parallel class corresponding to the derived design, where $(i,j)$ is denoted by $i_{j}$.

point $\infty_{0}$:
$$
$$

\section{Appendix: The base blocks and derived designs in Lemma \ref{rdsqs28}}

The following is the base blocks under modulo $(7, -)$ in Lemma \ref{rdsqs28}, where $(a, i)$ is denoted by $a_{i}$:

$0_{0}1_{0}2_{1}4_{1}~~~0_{0}6_{0}4_{1}5_{1}~~~0_{0}2_{0}3_{1}4_{1}~~~0_{0}5_{0}4_{1}6_{1}~~~0_{0}3_{0}1_{1}4_{1}~~~0_{0}1_{0}2_{0}4_{0}~~~0_{0}1_{0}5_{0}3_{1}$

$0_{0}2_{1}3_{1}5_{1}~~~0_{0}3_{0}2_{1}6_{1}~~~2_{1}4_{1}5_{1}6_{1}~~~0_{0}3_{0}0_{1}3_{1}~~~0_{0}1_{0}0_{1}1_{1}~~~0_{0}2_{0}0_{1}2_{1}$

$0_{2}1_{2}5_{2}4_{3}~~~0_{2}6_{2}4_{3}5_{3}~~~0_{2}2_{2}1_{3}4_{3}~~~0_{2}3_{3}4_{3}6_{3}~~~0_{2}1_{2}2_{2}4_{2}~~~0_{2}1_{2}2_{3}3_{3}~~~0_{2}2_{2}3_{3}5_{3}$

$0_{2}3_{2}1_{3}5_{3}~~~0_{2}3_{2}2_{3}4_{3}~~~2_{3}4_{3}5_{3}6_{3}~~~0_{2}3_{2}0_{3}3_{3}~~~0_{2}1_{2}0_{3}1_{3}~~~0_{2}2_{2}0_{3}2_{3}$

$0_{0}1_{1}0_{2}1_{3}~~~0_{0}1_{1}1_{2}2_{3}~~~0_{0}1_{1}2_{2}3_{3}~~~0_{0}1_{1}3_{2}4_{3}~~~0_{0}1_{1}4_{2}5_{3}~~~0_{0}1_{1}5_{2}6_{3}~~~0_{0}1_{1}6_{2}0_{3}$

$0_{0}2_{1}0_{2}2_{3}~~~0_{0}2_{1}1_{2}3_{3}~~~0_{0}2_{1}2_{2}4_{3}~~~0_{0}2_{1}3_{2}5_{3}~~~0_{0}2_{1}4_{2}6_{3}~~~0_{0}2_{1}5_{2}0_{3}~~~0_{0}2_{1}6_{2}1_{3}$

$0_{0}4_{1}0_{2}4_{3}~~~0_{0}4_{1}1_{2}5_{3}~~~0_{0}4_{1}2_{2}6_{3}~~~0_{0}4_{1}3_{2}0_{3}~~~0_{0}4_{1}4_{2}1_{3}~~~0_{0}4_{1}5_{2}2_{3}~~~0_{0}4_{1}6_{2}3_{3}$

$0_{0}3_{1}0_{2}6_{3}~~~0_{0}3_{1}1_{2}0_{3}~~~0_{0}3_{1}2_{2}1_{3}~~~0_{0}3_{1}3_{2}2_{3}~~~0_{0}3_{1}4_{2}3_{3}~~~0_{0}3_{1}5_{2}4_{3}~~~0_{0}3_{1}6_{2}5_{3}$

$0_{0}5_{1}0_{2}3_{3}~~~0_{0}5_{1}1_{2}4_{3}~~~0_{0}5_{1}2_{2}5_{3}~~~0_{0}5_{1}3_{2}6_{3}~~~0_{0}5_{1}4_{2}0_{3}~~~0_{0}5_{1}5_{2}1_{3}~~~0_{0}5_{1}6_{2}2_{3}$

$0_{0}6_{1}0_{2}5_{3}~~~0_{0}6_{1}1_{2}6_{3}~~~0_{0}6_{1}2_{2}0_{3}~~~0_{0}6_{1}3_{2}1_{3}~~~0_{0}6_{1}4_{2}2_{3}~~~0_{0}6_{1}5_{2}3_{3}~~~0_{0}6_{1}6_{2}4_{3}$

$0_{0}0_{1}0_{2}0_{3}~~~0_{0}0_{1}1_{2}3_{2}~~~0_{0}0_{1}2_{2}6_{2}~~~0_{0}0_{1}4_{2}5_{2}~~~0_{0}0_{1}1_{3}3_{3}~~~0_{0}0_{1}2_{3}6_{3}~~~0_{0}0_{1}4_{3}5_{3}$

$1_{0}3_{0}0_{2}0_{3}~~~1_{0}3_{0}1_{2}3_{2}~~~1_{0}3_{0}2_{2}6_{2}~~~1_{0}3_{0}4_{2}5_{2}~~~1_{0}3_{0}1_{3}3_{3}~~~1_{0}3_{0}2_{3}6_{3}~~~1_{0}3_{0}4_{3}5_{3}$

$2_{0}6_{0}0_{2}0_{3}~~~2_{0}6_{0}1_{2}3_{2}~~~2_{0}6_{0}2_{2}6_{2}~~~2_{0}6_{0}4_{2}5_{2}~~~2_{0}6_{0}1_{3}3_{3}~~~2_{0}6_{0}2_{3}6_{3}~~~2_{0}6_{0}4_{3}5_{3}$

$4_{0}5_{0}0_{2}0_{3}~~~4_{0}5_{0}1_{2}3_{2}~~~4_{0}5_{0}2_{2}6_{2}~~~4_{0}5_{0}4_{2}5_{2}~~~4_{0}5_{0}1_{3}3_{3}~~~4_{0}5_{0}2_{3}6_{3}~~~4_{0}5_{0}4_{3}5_{3}$

$1_{1}3_{1}0_{2}0_{3}~~~1_{1}3_{1}1_{2}3_{2}~~~1_{1}3_{1}2_{2}6_{2}~~~1_{1}3_{1}4_{2}5_{2}~~~1_{1}3_{1}1_{3}3_{3}~~~1_{1}3_{1}2_{3}6_{3}~~~1_{1}3_{1}4_{3}5_{3}$

$2_{1}6_{1}0_{2}0_{3}~~~2_{1}6_{1}1_{2}3_{2}~~~2_{1}6_{1}2_{2}6_{2}~~~2_{1}6_{1}4_{2}5_{2}~~~2_{1}6_{1}1_{3}3_{3}~~~2_{1}6_{1}2_{3}6_{3}~~~2_{1}6_{1}4_{3}5_{3}$

$4_{1}5_{1}0_{2}0_{3}~~~4_{1}5_{1}1_{2}3_{2}~~~4_{1}5_{1}2_{2}6_{2}~~~4_{1}5_{1}4_{2}5_{2}~~~4_{1}5_{1}1_{3}3_{3}~~~4_{1}5_{1}2_{3}6_{3}~~~4_{1}5_{1}4_{3}5_{3}$
\vspace{0.2cm}

Next we list the block sets of the multiset of the derived design at the points $0_{0},0_{1},0_{2},0_{3}$ in Lemma \ref{rdsqs28}, where the underlined blocks form a parallel class, which occurs three times, while the other blocks occurs two times;  moreover, the blocks in each row form a  corresponding parallel class.

point $0_{0}$:
$$\begin{array}{lllllllll}
\underline{3_{3}5_{2}6_{1}}&0_{1}4_{3}5_{3}&0_{3}1_{1}6_{2}&2_{2}3_{2}4_{0}&1_{3}2_{3}5_{0}&1_{0}2_{1}4_{1}&2_{0}3_{0}5_{1}&0_{2}3_{1}6_{3}&1_{2}4_{2}6_{0}\\[2pt]

3_{3}5_{2}6_{1}&3_{0}5_{3}6_{3}&2_{3}3_{1}3_{2}&0_{2}1_{1}1_{3}&0_{1}2_{0}2_{1}&1_{2}2_{2}5_{0}&1_{0}4_{2}6_{2}&4_{1}5_{1}6_{0}&0_{3}4_{0}4_{3}\\[2pt]

3_{3}5_{2}6_{1}&0_{1}1_{0}1_{1}&1_{3}2_{2}3_{1}&1_{2}4_{1}5_{3}&2_{0}5_{0}6_{0}&2_{3}5_{1}6_{2}&0_{3}4_{0}4_{3}&2_{1}4_{2}6_{3}&0_{2}3_{0}3_{2}\\[2pt]

\underline{1_{3}4_{3}6_{0}}&1_{2}6_{1}6_{3}&0_{3}5_{0}5_{3}&3_{3}4_{1}6_{2}&0_{2}2_{1}2_{3}&1_{0}2_{2}5_{2}&1_{1}4_{0}5_{1}&0_{1}3_{0}3_{1}&2_{0}3_{2}4_{2}\\[2pt]

1_{3}4_{3}6_{0}&0_{2}4_{0}4_{2}&2_{2}5_{1}5_{3}&0_{3}2_{1}5_{2}&4_{1}5_{0}6_{1}&1_{1}1_{2}2_{3}&2_{0}6_{2}6_{3}&0_{1}3_{0}3_{1}&1_{0}3_{2}3_{3}\\[2pt]

1_{3}4_{3}6_{0}&1_{0}2_{1}4_{1}&4_{0}5_{2}5_{3}&2_{3}4_{2}6_{1}&1_{1}2_{2}3_{3}&2_{0}6_{2}6_{3}&0_{3}1_{2}3_{1}&0_{1}5_{0}5_{1}&0_{2}3_{0}3_{2}\\[2pt]

\underline{1_{2}4_{0}6_{2}}&1_{3}3_{2}6_{1}&1_{0}3_{1}5_{0}&0_{3}6_{0}6_{3}&2_{2}5_{1}5_{3}&0_{2}2_{1}2_{3}&2_{0}3_{3}4_{3}&1_{1}3_{0}4_{1}&0_{1}4_{2}5_{2}\\[2pt]

1_{2}4_{0}6_{2}&0_{3}4_{2}5_{1}&1_{3}2_{0}5_{3}&2_{1}3_{0}6_{1}&0_{1}2_{3}6_{3}&1_{0}3_{1}5_{0}&0_{2}4_{1}4_{3}&1_{1}2_{2}3_{3}&3_{2}5_{2}6_{0}\\[2pt]

1_{2}4_{0}6_{2}&0_{3}4_{2}5_{1}&2_{3}4_{1}5_{2}&0_{2}5_{3}6_{1}&1_{0}3_{0}6_{0}&1_{3}2_{2}3_{1}&3_{3}5_{0}6_{3}&0_{1}2_{0}2_{1}&1_{1}3_{2}4_{3}\\[2pt]

 \end{array}$$
$$\begin{array}{lllllllll}

\underline{2_{0}3_{1}4_{1}}&1_{2}4_{2}6_{0}&1_{0}4_{3}6_{3}&0_{3}2_{2}6_{1}&0_{2}1_{1}1_{3}&2_{1}3_{2}5_{3}&2_{3}3_{3}4_{0}&3_{0}5_{2}6_{2}&0_{1}5_{0}5_{1}\\[2pt]

2_{0}3_{1}4_{1}&0_{3}1_{1}6_{2}&2_{2}3_{2}4_{0}&0_{1}2_{3}6_{3}&0_{2}5_{3}6_{1}&1_{0}3_{0}6_{0}&4_{2}4_{3}5_{0}&1_{2}2_{1}3_{3}&1_{3}5_{1}5_{2}\\[2pt]

2_{0}3_{1}4_{1}&0_{1}1_{2}3_{2}&0_{2}3_{3}5_{1}&1_{3}2_{3}5_{0}&2_{1}4_{0}6_{0}&1_{1}4_{2}5_{3}&1_{0}4_{3}6_{3}&0_{3}2_{2}6_{1}&3_{0}5_{2}6_{2}\\[2pt]

\underline{2_{2}3_{0}4_{2}}&2_{3}4_{1}5_{2}&0_{2}6_{0}6_{2}&1_{2}6_{1}6_{3}&0_{3}5_{0}5_{3}&1_{0}2_{0}4_{0}&2_{1}3_{1}5_{1}&0_{1}1_{3}3_{3}&1_{1}3_{2}4_{3}\\[2pt]

2_{2}3_{0}4_{2}&0_{1}4_{0}4_{1}&4_{3}6_{1}6_{2}&3_{3}5_{0}6_{3}&0_{2}1_{0}1_{2}&2_{1}3_{2}5_{3}&1_{1}3_{1}6_{0}&0_{3}2_{0}2_{3}&1_{3}5_{1}5_{2}\\[2pt]

2_{2}3_{0}4_{2}&1_{3}2_{0}5_{3}&0_{1}4_{0}4_{1}&0_{3}6_{0}6_{3}&4_{3}6_{1}6_{2}&2_{1}3_{1}5_{1}&1_{1}1_{2}2_{3}&0_{2}5_{0}5_{2}&1_{0}3_{2}3_{3}\\[2pt]

\underline{3_{2}5_{1}6_{3}}&1_{1}2_{0}6_{1}&2_{1}4_{0}6_{0}&0_{3}1_{0}1_{3}&3_{1}3_{3}4_{2}&1_{2}4_{1}5_{3}&0_{1}2_{2}6_{2}&0_{2}5_{0}5_{2}&2_{3}3_{0}4_{3}\\[2pt]

3_{2}5_{1}6_{3}&3_{1}5_{3}6_{2}&1_{1}2_{0}6_{1}&2_{2}2_{3}6_{0}&0_{2}4_{1}4_{3}&0_{3}1_{0}1_{3}&3_{0}4_{0}5_{0}&1_{2}2_{1}3_{3}&0_{1}4_{2}5_{2}\\[2pt]

3_{2}5_{1}6_{3}&3_{1}5_{3}6_{2}&0_{2}4_{0}4_{2}&0_{1}1_{0}1_{1}&1_{2}1_{3}3_{0}&2_{2}2_{3}6_{0}&0_{3}2_{1}5_{2}&2_{0}3_{3}4_{3}&4_{1}5_{0}6_{1}\\[2pt]

\underline{1_{0}2_{3}5_{3}}&0_{1}1_{2}3_{2}&0_{3}3_{0}3_{3}&0_{2}2_{0}2_{2}&1_{3}2_{1}6_{2}&4_{1}5_{1}6_{0}&4_{2}4_{3}5_{0}&3_{1}4_{0}6_{1}&1_{1}5_{2}6_{3}\\[2pt]

1_{0}2_{3}5_{3}&0_{1}6_{0}6_{1}&1_{2}4_{3}5_{1}&0_{2}2_{0}2_{2}&3_{1}3_{3}4_{2}&1_{3}2_{1}6_{2}&3_{0}4_{0}5_{0}&0_{3}3_{2}4_{1}&1_{1}5_{2}6_{3}\\[2pt]

1_{0}2_{3}5_{3}&0_{1}6_{0}6_{1}&3_{2}5_{0}6_{2}&2_{1}2_{2}4_{3}&0_{3}3_{0}3_{3}&0_{2}3_{1}6_{3}&1_{1}4_{0}5_{1}&1_{3}4_{1}4_{2}&1_{2}2_{0}5_{2}\\[2pt]

\underline{1_{1}2_{1}5_{0}}&0_{2}6_{0}6_{2}&2_{2}4_{1}6_{3}&4_{0}5_{2}5_{3}&1_{0}5_{1}6_{1}&2_{0}3_{2}4_{2}&0_{1}1_{3}3_{3}&0_{3}1_{2}3_{1}&2_{3}3_{0}4_{3}\\[2pt]

1_{1}2_{1}5_{0}&0_{1}4_{3}5_{3}&0_{2}3_{3}5_{1}&1_{2}1_{3}3_{0}&2_{2}4_{1}6_{3}&1_{0}4_{2}6_{2}&3_{1}4_{0}6_{1}&0_{3}2_{0}2_{3}&3_{2}5_{2}6_{0}\\[2pt]

1_{1}2_{1}5_{0}&2_{0}3_{0}5_{1}&3_{1}4_{3}5_{2}&2_{3}4_{2}6_{1}&3_{3}5_{3}6_{0}&0_{2}1_{0}1_{2}&0_{1}2_{2}6_{2}&1_{3}4_{0}6_{3}&0_{3}3_{2}4_{1}\\[2pt]

\underline{0_{1}0_{2}0_{3}}&3_{1}4_{3}5_{2}&1_{3}3_{2}6_{1}&1_{0}2_{0}4_{0}&3_{3}5_{3}6_{0}&2_{3}5_{1}6_{2}&1_{2}2_{2}5_{0}&1_{1}3_{0}4_{1}&2_{1}4_{2}6_{3}\\[2pt]

0_{1}0_{2}0_{3}&2_{1}3_{0}6_{1}&1_{2}4_{3}5_{1}&1_{1}4_{2}5_{3}&3_{3}4_{1}6_{2}&2_{3}3_{1}3_{2}&2_{0}5_{0}6_{0}&1_{0}2_{2}5_{2}&1_{3}4_{0}6_{3}\\[2pt]

0_{1}0_{2}0_{3}&3_{2}5_{0}6_{2}&2_{1}2_{2}4_{3}&3_{0}5_{3}6_{3}&1_{0}5_{1}6_{1}&1_{3}4_{1}4_{2}&2_{3}3_{3}4_{0}&1_{1}3_{1}6_{0}&1_{2}2_{0}5_{2}

\end{array}$$

point $0_{1}$:
$$\begin{array}{lllllllll}

 \underline{3_{1}5_{3}6_{3}}&3_{3}4_{0}4_{2}&0_{2}6_{1}6_{2}&3_{0}4_{1}6_{0}&1_{0}1_{3}3_{2}&1_{2}2_{0}4_{3}&0_{3}2_{1}2_{3}&1_{1}2_{2}5_{2}&0_{0}5_{0}5_{1}\\[2pt]

 3_{1}5_{3}6_{3}&3_{0}3_{3}6_{2}&2_{1}4_{0}6_{1}&0_{2}2_{3}5_{0}&1_{0}5_{1}6_{0}&1_{2}2_{0}4_{3}&0_{3}1_{1}1_{3}&0_{0}4_{2}5_{2}&2_{2}3_{2}4_{1}\\[2pt]

 3_{1}5_{3}6_{3}&0_{0}1_{2}3_{2}&1_{0}4_{1}5_{0}&0_{2}2_{0}3_{3}&2_{2}2_{3}6_{1}&1_{1}2_{1}5_{1}&1_{3}3_{0}4_{2}&0_{3}6_{0}6_{2}&4_{0}4_{3}5_{2}\\[2pt]

 \underline{1_{3}2_{0}5_{2}}&3_{0}3_{3}6_{2}&2_{1}4_{0}6_{1}&0_{0}2_{3}6_{3}&0_{3}5_{1}5_{3}&3_{2}4_{3}6_{0}&1_{0}4_{1}5_{0}&2_{2}3_{1}4_{2}&0_{2}1_{1}1_{2}\\[2pt]

 1_{3}2_{0}5_{2}&0_{0}6_{0}6_{1}&3_{3}4_{0}4_{2}&1_{2}3_{0}5_{3}&2_{1}3_{1}4_{1}&0_{2}2_{3}5_{0}&3_{2}5_{1}6_{2}&0_{3}1_{0}2_{2}&1_{1}4_{3}6_{3}\\[2pt]

 1_{3}2_{0}5_{2}&0_{0}2_{3}6_{3}&1_{2}4_{2}6_{1}&3_{0}4_{0}5_{1}&0_{2}1_{0}5_{3}&1_{1}3_{1}5_{0}&2_{1}3_{3}4_{3}&0_{3}6_{0}6_{2}&2_{2}3_{2}4_{1}\\[2pt]

 \underline{1_{2}2_{2}5_{1}}&2_{1}5_{0}6_{0}&0_{2}4_{1}4_{2}&0_{0}1_{0}1_{1}&4_{0}5_{3}6_{2}&2_{3}3_{0}5_{2}&0_{3}3_{1}3_{3}&1_{3}4_{3}6_{1}&2_{0}3_{2}6_{3}\\[2pt]

 1_{2}2_{2}5_{1}&0_{3}4_{1}4_{3}&3_{2}5_{2}6_{1}&2_{0}2_{3}6_{2}&1_{1}3_{1}5_{0}&0_{0}1_{3}3_{3}&4_{2}5_{3}6_{0}&1_{0}2_{1}3_{0}&0_{2}4_{0}6_{3}\\[2pt]

 1_{2}2_{2}5_{1}&0_{3}5_{0}5_{2}&2_{1}3_{1}4_{1}&3_{2}4_{3}6_{0}&2_{0}2_{3}6_{2}&0_{0}1_{0}1_{1}&3_{3}5_{3}6_{1}&1_{3}3_{0}4_{2}&0_{2}4_{0}6_{3}\\[2pt]

 \underline{3_{0}5_{0}6_{1}}&0_{0}4_{3}5_{3}&0_{2}2_{1}2_{2}&1_{2}4_{1}6_{2}&1_{0}3_{1}4_{0}&1_{1}3_{2}3_{3}&1_{3}2_{3}5_{1}&5_{2}6_{0}6_{3}&0_{3}2_{0}4_{2}\\[2pt]

 3_{0}5_{0}6_{1}&0_{2}2_{1}2_{2}&0_{3}4_{1}4_{3}&1_{0}1_{2}6_{3}&2_{0}3_{1}6_{0}&4_{0}5_{3}6_{2}&1_{1}3_{2}3_{3}&1_{3}2_{3}5_{1}&0_{0}4_{2}5_{2}\\[2pt]

 3_{0}5_{0}6_{1}&2_{1}6_{2}6_{3}&4_{2}4_{3}5_{1}&0_{2}3_{1}3_{2}&1_{2}2_{3}6_{0}&0_{3}1_{1}1_{3}&1_{0}3_{3}5_{2}&0_{0}4_{0}4_{1}&2_{0}2_{2}5_{3}\\[2pt]

 \underline{1_{1}4_{0}6_{0}}&1_{3}5_{0}6_{2}&0_{2}4_{1}4_{2}&1_{2}3_{0}5_{3}&3_{2}5_{2}6_{1}&2_{3}3_{1}4_{3}&0_{0}2_{0}2_{1}&3_{3}5_{1}6_{3}&0_{3}1_{0}2_{2}\\[2pt]

 1_{1}4_{0}6_{0}&2_{2}4_{3}5_{0}&0_{2}6_{1}6_{2}&4_{1}5_{2}5_{3}&0_{0}2_{0}2_{1}&3_{3}5_{1}6_{3}&1_{0}2_{3}4_{2}&1_{2}1_{3}3_{1}&0_{3}3_{0}3_{2}\\[2pt]

 1_{1}4_{0}6_{0}&0_{0}1_{2}3_{2}&1_{3}5_{0}6_{2}&0_{3}6_{1}6_{3}&0_{2}1_{0}5_{3}&2_{3}3_{0}5_{2}&2_{1}3_{3}4_{3}&2_{0}4_{1}5_{1}&2_{2}3_{1}4_{2}\\[2pt]

 \underline{1_{0}4_{3}6_{2}}&1_{3}2_{2}4_{0}&1_{2}3_{3}5_{0}&1_{1}4_{1}6_{1}&0_{2}5_{1}5_{2}&2_{0}3_{2}6_{3}&0_{3}2_{1}2_{3}&0_{0}3_{0}3_{1}&4_{2}5_{3}6_{0}\\[2pt]

 1_{0}4_{3}6_{2}&2_{1}5_{0}6_{0}&4_{1}5_{2}5_{3}&2_{3}3_{2}4_{0}&0_{2}1_{1}1_{2}&3_{1}5_{1}6_{1}&2_{2}3_{0}6_{3}&0_{0}1_{3}3_{3}&0_{3}2_{0}4_{2}\\[2pt]

 1_{0}4_{3}6_{2}&4_{2}5_{0}6_{3}&0_{2}2_{0}3_{3}&1_{2}2_{3}6_{0}&1_{1}2_{2}5_{2}&1_{3}2_{1}5_{3}&3_{1}5_{1}6_{1}&0_{0}4_{0}4_{1}&0_{3}3_{0}3_{2}\\[2pt]

 \underline{2_{3}3_{3}4_{1}}&1_{2}2_{1}5_{2}&0_{2}1_{3}6_{0}&1_{0}3_{1}4_{0}&0_{3}6_{1}6_{3}&4_{2}4_{3}5_{1}&1_{1}2_{0}3_{0}&3_{2}5_{0}5_{3}&0_{0}2_{2}6_{2}\\[2pt]

 2_{3}3_{3}4_{1}&2_{1}6_{2}6_{3}&0_{0}4_{3}5_{3}&1_{3}2_{2}4_{0}&0_{3}5_{0}5_{2}&1_{2}4_{2}6_{1}&0_{2}3_{1}3_{2}&1_{0}5_{1}6_{0}&1_{1}2_{0}3_{0}\\[2pt]

 2_{3}3_{3}4_{1}&0_{0}6_{0}6_{1}&0_{3}1_{2}4_{0}&3_{1}5_{2}6_{2}&4_{2}5_{0}6_{3}&1_{0}1_{3}3_{2}&1_{1}2_{1}5_{1}&0_{2}3_{0}4_{3}&2_{0}2_{2}5_{3}\\[2pt]

  \end{array}$$
$$\begin{array}{lllllllll}

 \underline{2_{1}3_{2}4_{2}}&0_{3}1_{2}4_{0}&3_{1}5_{2}6_{2}&2_{2}3_{3}6_{0}&1_{1}2_{3}5_{3}&1_{0}2_{0}6_{1}&0_{2}3_{0}4_{3}&1_{3}4_{1}6_{3}&0_{0}5_{0}5_{1}\\[2pt]

 2_{1}3_{2}4_{2}&2_{0}4_{0}5_{0}&1_{0}1_{2}6_{3}&3_{0}4_{1}6_{0}&0_{2}5_{1}5_{2}&0_{3}3_{1}3_{3}&1_{1}2_{3}5_{3}&1_{3}4_{3}6_{1}&0_{0}2_{2}6_{2}\\[2pt]

 2_{1}3_{2}4_{2}&2_{0}4_{0}5_{0}&0_{2}1_{3}6_{0}&1_{2}4_{1}6_{2}&0_{3}5_{1}5_{3}&2_{2}2_{3}6_{1}&1_{0}3_{3}5_{2}&1_{1}4_{3}6_{3}&0_{0}3_{0}3_{1}\\[2pt]

 \underline{0_{0}0_{2}0_{3}}&1_{2}2_{1}5_{2}&3_{0}4_{0}5_{1}&2_{3}3_{1}4_{3}&2_{2}3_{3}6_{0}&3_{2}5_{0}5_{3}&1_{0}2_{0}6_{1}&1_{3}4_{1}6_{3}&1_{1}4_{2}6_{2}\\[2pt]

 0_{0}0_{2}0_{3}&1_{2}3_{3}5_{0}&1_{1}4_{1}6_{1}&2_{0}3_{1}6_{0}&3_{2}5_{1}6_{2}&1_{0}2_{3}4_{2}&1_{3}2_{1}5_{3}&2_{2}3_{0}6_{3}&4_{0}4_{3}5_{2}\\[2pt]

 0_{0}0_{2}0_{3}&2_{2}4_{3}5_{0}&3_{3}5_{3}6_{1}&2_{0}4_{1}5_{1}&1_{2}1_{3}3_{1}&2_{3}3_{2}4_{0}&5_{2}6_{0}6_{3}&1_{0}2_{1}3_{0}&1_{1}4_{2}6_{2}

\end{array}$$

Point $0_{2}$:
$$\begin{array}{lllllllll}

 \underline{1_{2}5_{3}6_{3}}&1_{3}2_{2}4_{3}&1_{0}2_{3}3_{1}&3_{3}4_{1}6_{0}&2_{0}2_{1}6_{2}&0_{0}3_{0}3_{2}&0_{3}4_{0}5_{0}&0_{1}5_{1}5_{2}&1_{1}4_{2}6_{1}\\[2pt]

 1_{2}5_{3}6_{3}&3_{2}4_{2}5_{2}&1_{3}3_{0}4_{1}&1_{1}2_{3}6_{0}&2_{0}4_{3}6_{1}&2_{1}3_{3}4_{0}&0_{0}0_{1}0_{3}&1_{0}2_{2}5_{0}&3_{1}5_{1}6_{2}\\[2pt]

 1_{2}5_{3}6_{3}&1_{3}2_{3}6_{2}&0_{0}4_{1}4_{3}&1_{0}3_{3}6_{1}&4_{2}5_{0}5_{1}&0_{3}2_{0}6_{0}&2_{2}3_{0}4_{0}&0_{1}3_{1}3_{2}&1_{1}2_{1}5_{2}\\[2pt]

 \underline{1_{3}3_{3}5_{2}}&0_{3}4_{2}4_{3}&2_{3}4_{0}6_{1}&2_{0}5_{1}6_{3}&4_{1}5_{0}5_{3}&1_{0}1_{1}3_{2}&1_{2}3_{0}3_{1}&0_{0}6_{0}6_{2}&0_{1}2_{1}2_{2}\\[2pt]

 1_{3}3_{3}5_{2}&0_{3}6_{2}6_{3}&2_{0}2_{3}4_{1}&2_{1}4_{3}5_{0}&5_{1}5_{3}6_{0}&2_{2}3_{0}4_{0}&0_{0}1_{0}1_{2}&1_{1}4_{2}6_{1}&0_{1}3_{1}3_{2}\\[2pt]

 1_{3}3_{3}5_{2}&2_{2}3_{2}6_{3}&2_{3}3_{0}5_{1}&3_{1}4_{3}6_{0}&0_{1}1_{0}5_{3}&0_{0}4_{0}4_{2}&1_{2}2_{0}5_{0}&0_{3}2_{1}6_{1}&1_{1}4_{1}6_{2}\\[2pt]

 \underline{0_{0}2_{1}2_{3}}&3_{3}4_{3}6_{3}&1_{2}2_{2}4_{2}&1_{3}2_{0}3_{1}&0_{1}1_{0}5_{3}&3_{0}5_{2}6_{0}&0_{3}4_{0}5_{0}&1_{1}4_{1}6_{2}&3_{2}5_{1}6_{1}\\[2pt]

 0_{0}2_{1}2_{3}&2_{2}3_{3}5_{3}&1_{3}4_{2}6_{3}&2_{0}4_{3}6_{1}&4_{0}4_{1}5_{2}&0_{3}1_{0}3_{0}&3_{2}5_{0}6_{0}&3_{1}5_{1}6_{2}&0_{1}1_{1}1_{2}\\[2pt]

 0_{0}2_{1}2_{3}&1_{2}2_{2}4_{2}&0_{3}5_{2}5_{3}&1_{3}2_{0}3_{1}&0_{1}3_{0}4_{3}&1_{1}5_{0}6_{3}&3_{3}4_{1}6_{0}&1_{0}4_{0}6_{2}&3_{2}5_{1}6_{1}\\[2pt]

 \underline{1_{0}4_{3}5_{1}}&2_{2}3_{3}5_{3}&0_{3}6_{2}6_{3}&0_{0}1_{1}1_{3}&0_{1}2_{3}5_{0}&1_{2}4_{0}6_{0}&2_{0}3_{0}4_{2}&2_{1}3_{2}4_{1}&3_{1}5_{2}6_{1}\\[2pt]

 1_{0}4_{3}5_{1}&2_{2}5_{2}6_{2}&0_{3}3_{2}3_{3}&0_{0}1_{1}1_{3}&0_{1}2_{3}5_{0}&2_{1}6_{0}6_{3}&3_{1}4_{0}5_{3}&2_{0}3_{0}4_{2}&1_{2}4_{1}6_{1}\\[2pt]

 1_{0}4_{3}5_{1}&2_{2}5_{2}6_{2}&0_{3}1_{2}1_{3}&2_{0}2_{3}4_{1}&0_{1}4_{0}6_{3}&1_{1}3_{0}3_{3}&0_{0}5_{3}6_{1}&3_{2}5_{0}6_{0}&2_{1}3_{1}4_{2}\\[2pt]

 \underline{2_{2}6_{0}6_{1}}&3_{3}4_{2}6_{2}&2_{3}3_{2}4_{3}&1_{3}3_{0}4_{1}&1_{1}5_{0}6_{3}&3_{1}4_{0}5_{3}&0_{0}0_{1}0_{3}&1_{0}2_{0}5_{2}&1_{2}2_{1}5_{1}\\[2pt]

 2_{2}6_{0}6_{1}&3_{3}4_{2}6_{2}&2_{3}3_{2}4_{3}&1_{0}1_{3}2_{1}&0_{1}4_{0}6_{3}&1_{1}2_{0}5_{3}&1_{2}3_{0}3_{1}&0_{0}5_{0}5_{2}&0_{3}4_{1}5_{1}\\[2pt]

 2_{2}6_{0}6_{1}&1_{2}3_{2}6_{2}&1_{3}4_{2}6_{3}&1_{0}2_{3}3_{1}&1_{1}4_{0}4_{3}&0_{1}2_{0}3_{3}&2_{1}3_{0}5_{3}&0_{0}5_{0}5_{2}&0_{3}4_{1}5_{1}\\[2pt]

 \underline{2_{0}3_{2}4_{0}}&1_{2}2_{3}3_{3}&0_{3}5_{2}5_{3}&1_{3}5_{0}6_{1}&0_{1}3_{0}4_{3}&1_{0}4_{1}6_{3}&0_{0}6_{0}6_{2}&1_{1}2_{2}5_{1}&2_{1}3_{1}4_{2}\\[2pt]

 2_{0}3_{2}4_{0}&4_{3}5_{3}6_{2}&2_{3}5_{2}6_{3}&0_{1}1_{3}6_{0}&1_{1}3_{0}3_{3}&4_{2}5_{0}5_{1}&0_{0}1_{0}1_{2}&0_{3}2_{1}6_{1}&2_{2}3_{1}4_{1}\\[2pt]

 2_{0}3_{2}4_{0}&2_{3}5_{2}6_{3}&0_{3}1_{2}1_{3}&0_{0}4_{1}4_{3}&3_{1}3_{3}5_{0}&2_{1}3_{0}5_{3}&1_{0}4_{2}6_{0}&1_{1}2_{2}5_{1}&0_{1}6_{1}6_{2}\\[2pt]

 \underline{3_{0}5_{0}6_{2}}&1_{3}2_{2}4_{3}&2_{3}4_{2}5_{3}&0_{0}3_{1}6_{3}&2_{1}3_{3}4_{0}&1_{0}1_{1}3_{2}&0_{3}2_{0}6_{0}&0_{1}5_{1}5_{2}&1_{2}4_{1}6_{1}\\[2pt]

 3_{0}5_{0}6_{2}&2_{3}4_{2}5_{3}&0_{3}3_{2}3_{3}&0_{1}1_{3}6_{0}&1_{1}4_{0}4_{3}&1_{0}4_{1}6_{3}&0_{0}2_{0}2_{2}&3_{1}5_{2}6_{1}&1_{2}2_{1}5_{1}\\[2pt]

 3_{0}5_{0}6_{2}&1_{3}3_{2}5_{3}&0_{3}4_{2}4_{3}&2_{3}4_{0}6_{1}&2_{1}6_{0}6_{3}&0_{0}3_{3}5_{1}&1_{0}2_{0}5_{2}&2_{2}3_{1}4_{1}&0_{1}1_{1}1_{2}\\[2pt]

 \end{array}$$
$$\begin{array}{lllllllll}

 \underline{0_{3}1_{1}3_{1}}&1_{2}4_{3}5_{2}&1_{3}2_{3}6_{2}&3_{0}6_{1}6_{3}&0_{1}2_{0}3_{3}&5_{1}5_{3}6_{0}&1_{0}2_{2}5_{0}&0_{0}4_{0}4_{2}&2_{1}3_{2}4_{1}\\[2pt]

 0_{3}1_{1}3_{1}&4_{3}5_{3}6_{2}&1_{2}2_{3}3_{3}&1_{3}5_{0}6_{1}&2_{0}5_{1}6_{3}&4_{0}4_{1}5_{2}&1_{0}4_{2}6_{0}&0_{0}3_{0}3_{2}&0_{1}2_{1}2_{2}\\[2pt]

 0_{3}1_{1}3_{1}&3_{3}4_{3}6_{3}&3_{2}4_{2}5_{2}&1_{0}1_{3}2_{1}&2_{3}3_{0}5_{1}&4_{1}5_{0}5_{3}&0_{0}2_{0}2_{2}&1_{2}4_{0}6_{0}&0_{1}6_{1}6_{2}\\[2pt]

 \underline{0_{1}4_{1}4_{2}}&2_{2}3_{2}6_{3}&1_{2}4_{3}5_{2}&1_{3}4_{0}5_{1}&1_{1}2_{3}6_{0}&3_{1}3_{3}5_{0}&0_{0}5_{3}6_{1}&2_{0}2_{1}6_{2}&0_{3}1_{0}3_{0}\\[2pt]

 0_{1}4_{1}4_{2}&1_{2}3_{2}6_{2}&0_{3}2_{2}2_{3}&1_{3}4_{0}5_{1}&2_{1}4_{3}5_{0}&0_{0}3_{1}6_{3}&1_{0}3_{3}6_{1}&1_{1}2_{0}5_{3}&3_{0}5_{2}6_{0}\\[2pt]

 0_{1}4_{1}4_{2}&1_{3}3_{2}5_{3}&0_{3}2_{2}2_{3}&3_{1}4_{3}6_{0}&3_{0}6_{1}6_{3}&0_{0}3_{3}5_{1}&1_{0}4_{0}6_{2}&1_{2}2_{0}5_{0}&1_{1}2_{1}5_{2}

\end{array}$$

Point $0_{3}$:
$$\begin{array}{lllllllll}

 \underline{3_{2}5_{3}6_{2}}&2_{2}3_{3}5_{2}&1_{2}3_{0}6_{1}&0_{0}4_{2}5_{1}&2_{0}2_{1}6_{3}&1_{3}4_{0}6_{0}&1_{0}2_{3}5_{0}&0_{2}1_{1}3_{1}&0_{1}4_{1}4_{3}\\[2pt]

 3_{2}5_{3}6_{2}&1_{3}3_{3}4_{2}&4_{0}5_{2}6_{1}&0_{0}1_{2}3_{1}&1_{1}2_{0}2_{2}&1_{0}4_{3}6_{0}&3_{0}5_{0}6_{3}&0_{1}2_{1}2_{3}&0_{2}4_{1}5_{1}\\[2pt]

 3_{2}5_{3}6_{2}&0_{2}1_{2}1_{3}&4_{0}5_{2}6_{1}&0_{1}2_{0}4_{2}&2_{1}2_{2}3_{0}&1_{0}1_{1}3_{3}&4_{3}5_{0}5_{1}&0_{0}6_{0}6_{3}&2_{3}3_{1}4_{1}\\[2pt]

 \underline{1_{2}2_{3}5_{2}}&1_{3}2_{2}3_{2}&4_{0}5_{1}6_{2}&4_{1}4_{2}6_{0}&2_{0}2_{1}6_{3}&0_{2}1_{0}3_{0}&0_{0}5_{0}5_{3}&1_{1}4_{3}6_{1}&0_{1}3_{1}3_{3}\\[2pt]

 1_{2}2_{3}5_{2}&0_{2}4_{2}4_{3}&1_{0}2_{1}6_{2}&0_{0}3_{2}4_{1}&2_{2}5_{1}6_{0}&2_{0}3_{3}4_{0}&3_{0}5_{0}6_{3}&3_{1}5_{3}6_{1}&0_{1}1_{1}1_{3}\\[2pt]

 1_{2}2_{3}5_{2}&1_{3}3_{3}4_{2}&0_{1}6_{0}6_{2}&2_{0}3_{2}6_{1}&2_{2}4_{1}5_{0}&0_{2}1_{0}3_{0}&0_{0}4_{0}4_{3}&3_{1}5_{1}6_{3}&1_{1}2_{1}5_{3}\\[2pt]

 \underline{2_{2}3_{1}4_{0}}&4_{2}5_{2}6_{3}&0_{2}1_{2}1_{3}&0_{0}1_{1}6_{2}&0_{1}3_{0}3_{2}&2_{3}6_{0}6_{1}&4_{3}5_{0}5_{1}&1_{0}2_{0}5_{3}&2_{1}3_{3}4_{1}\\[2pt]

 2_{2}3_{1}4_{0}&2_{3}4_{2}6_{2}&1_{3}4_{3}6_{3}&0_{1}5_{0}5_{2}&0_{0}3_{2}4_{1}&1_{2}2_{0}5_{1}&1_{0}1_{1}3_{3}&3_{0}5_{3}6_{0}&0_{2}2_{1}6_{1}\\[2pt]

 2_{2}3_{1}4_{0}&3_{3}5_{3}6_{3}&0_{2}4_{2}4_{3}&5_{0}6_{1}6_{2}&2_{0}4_{1}5_{2}&0_{1}3_{0}3_{2}&1_{2}2_{1}6_{0}&0_{0}1_{0}1_{3}&1_{1}2_{3}5_{1}\\[2pt]

 \underline{1_{0}4_{2}6_{1}}&1_{2}4_{3}5_{3}&1_{3}5_{2}6_{2}&2_{1}3_{2}5_{0}&1_{1}2_{0}2_{2}&2_{3}3_{0}4_{0}&0_{0}6_{0}6_{3}&0_{1}3_{1}3_{3}&0_{2}4_{1}5_{1}\\[2pt]

 1_{0}4_{2}6_{1}&1_{2}2_{2}6_{3}&0_{2}5_{2}5_{3}&2_{0}3_{1}6_{2}&2_{1}3_{2}5_{0}&1_{3}4_{0}6_{0}&0_{0}3_{0}3_{3}&1_{1}2_{3}5_{1}&0_{1}4_{1}4_{3}\\[2pt]

 1_{0}4_{2}6_{1}&1_{3}4_{3}6_{3}&0_{2}2_{2}2_{3}&3_{0}4_{1}6_{2}&0_{0}2_{1}5_{2}&3_{1}3_{2}6_{0}&1_{1}1_{2}5_{0}&2_{0}3_{3}4_{0}&0_{1}5_{1}5_{3}\\[2pt]

 \underline{0_{0}0_{1}0_{2}}&2_{3}3_{2}6_{3}&2_{2}4_{3}6_{2}&1_{0}3_{1}5_{2}&1_{2}2_{1}6_{0}&1_{1}3_{0}4_{2}&4_{0}4_{1}5_{3}&1_{3}2_{0}5_{0}&3_{3}5_{1}6_{1}\\[2pt]

 0_{0}0_{1}0_{2}&1_{2}2_{2}6_{3}&2_{3}3_{3}4_{3}&1_{0}2_{1}6_{2}&3_{0}5_{1}5_{2}&1_{1}3_{2}4_{0}&4_{1}4_{2}6_{0}&1_{3}2_{0}5_{0}&3_{1}5_{3}6_{1}\\[2pt]

 0_{0}0_{1}0_{2}&2_{2}3_{3}5_{2}&1_{3}2_{3}5_{3}&3_{0}4_{1}6_{2}&2_{0}3_{2}6_{1}&1_{1}1_{2}5_{0}&2_{1}4_{0}4_{2}&1_{0}4_{3}6_{0}&3_{1}5_{1}6_{3}\\[2pt]

 \underline{3_{3}5_{0}6_{0}}&3_{2}4_{3}5_{2}&2_{3}4_{2}6_{2}&1_{2}2_{0}5_{1}&2_{1}2_{2}3_{0}&4_{0}4_{1}5_{3}&0_{0}1_{0}1_{3}&0_{2}1_{1}3_{1}&0_{1}6_{1}6_{3}\\[2pt]

 3_{3}5_{0}6_{0}&2_{3}3_{2}6_{3}&2_{2}4_{2}5_{3}&2_{0}3_{1}6_{2}&3_{0}5_{1}5_{2}&1_{0}1_{2}4_{1}&0_{0}4_{0}4_{3}&0_{2}2_{1}6_{1}&0_{1}1_{1}1_{3}\\[2pt]

 3_{3}5_{0}6_{0}&1_{3}2_{2}3_{2}&0_{2}5_{2}5_{3}&4_{0}5_{1}6_{2}&1_{0}1_{2}4_{1}&1_{1}3_{0}4_{2}&0_{0}2_{0}2_{3}&2_{1}3_{1}4_{3}&0_{1}6_{1}6_{3}\\[2pt]

 \underline{2_{0}3_{0}4_{3}}&1_{2}3_{3}6_{2}&0_{2}2_{2}2_{3}&0_{1}5_{0}5_{2}&3_{1}3_{2}6_{0}&0_{0}4_{2}5_{1}&1_{0}4_{0}6_{3}&1_{3}4_{1}6_{1}&1_{1}2_{1}5_{3}\\[2pt]

 2_{0}3_{0}4_{3}&1_{2}3_{2}4_{2}&3_{3}5_{3}6_{3}&0_{0}1_{1}6_{2}&1_{0}3_{1}5_{2}&2_{2}5_{1}6_{0}&0_{2}4_{0}5_{0}&0_{1}2_{1}2_{3}&1_{3}4_{1}6_{1}\\[2pt]

 2_{0}3_{0}4_{3}&1_{3}2_{3}5_{3}&0_{2}6_{2}6_{3}&1_{1}5_{2}6_{0}&1_{0}3_{2}5_{1}&0_{1}1_{2}4_{0}&3_{1}4_{2}5_{0}&0_{0}2_{2}6_{1}&2_{1}3_{3}4_{1}\\[2pt]

 \underline{1_{1}4_{1}6_{3}}&3_{2}4_{3}5_{2}&1_{2}3_{3}6_{2}&2_{1}4_{0}4_{2}&0_{0}2_{2}6_{1}&1_{3}3_{0}3_{1}&0_{2}2_{0}6_{0}&1_{0}2_{3}5_{0}&0_{1}5_{1}5_{3}\\[2pt]

 1_{1}4_{1}6_{3}&2_{2}4_{2}5_{3}&2_{3}3_{3}4_{3}&5_{0}6_{1}6_{2}&0_{0}2_{1}5_{2}&1_{0}3_{2}5_{1}&0_{1}1_{2}4_{0}&1_{3}3_{0}3_{1}&0_{2}2_{0}6_{0}\\[2pt]

 1_{1}4_{1}6_{3}&1_{2}3_{2}4_{2}&1_{3}5_{2}6_{2}&0_{1}1_{0}2_{2}&0_{0}2_{0}2_{3}&3_{0}5_{3}6_{0}&0_{2}4_{0}5_{0}&3_{3}5_{1}6_{1}&2_{1}3_{1}4_{3}\\[2pt]

 \underline{1_{3}2_{1}5_{1}}&1_{2}4_{3}5_{3}&0_{2}6_{2}6_{3}&2_{0}4_{1}5_{2}&1_{1}3_{2}4_{0}&3_{1}4_{2}5_{0}&0_{1}1_{0}2_{2}&2_{3}6_{0}6_{1}&0_{0}3_{0}3_{3}\\[2pt]

 1_{3}2_{1}5_{1}&2_{2}4_{3}6_{2}&0_{2}3_{2}3_{3}&1_{1}5_{2}6_{0}&1_{2}3_{0}6_{1}&0_{1}2_{0}4_{2}&0_{0}5_{0}5_{3}&1_{0}4_{0}6_{3}&2_{3}3_{1}4_{1}\\[2pt]

 1_{3}2_{1}5_{1}&4_{2}5_{2}6_{3}&0_{2}3_{2}3_{3}&0_{1}6_{0}6_{2}&0_{0}1_{2}3_{1}&2_{2}4_{1}5_{0}&1_{0}2_{0}5_{3}&2_{3}3_{0}4_{0}&1_{1}4_{3}6_{1}

\end{array}$$

\end{document}